\newtheorem{theorem}{Theorem}[section]
\newtheorem{claim}[theorem]{Claim}
\newtheorem{lemma}[theorem]{Lemma}
\newtheorem{proposition}[theorem]{Proposition}
\newenvironment{proof}[1][Proof]{\textbf{#1.} }{\ \rule{0.5em}{0.5em} \vspace{0.1 cm}}
\newcommand{\finpreuve}{\ \rule{0.5em}{0.5em} \vspace{0.1 cm}}
\newcommand{\bx}{\mbox{$\textbf{x}$}}
\newcommand{\bU}{\mbox{$\textbf{U}$}}
\newcommand{\bb}{\mbox{$\textbf{b}$}}
\newcommand{\by}{\mbox{$\textbf{y}$}}%
\newcommand{\be}{\mbox{$\textbf{e}$}}%
\newcommand{\bC}{\mbox{$\textbf{C}$}}%
\newcommand{\bp}{\mbox{$\textbf{p}$}}%
\newcommand{\bq}{\mbox{$\textbf{q}$}}
\newcommand{\bn}{\mbox{$\textbf{n}$}}%
\newcommand{\bz}{\mbox{$\textbf{z}$}}
\newcommand{\boldf}{\mbox{$\textbf{f}$}}
\newcommand{\R}{\mbox{$\mathbb{R}$}}%
\begin{document}
\title{Evolutionary dynamics may eliminate all strategies used in correlated equilibrium}
\author{Yannick Viossat\thanks{E-mail address: yannick.viossat@polytechnique.org. Address:
CEREMADE, Universit\'{e} Paris-Dauphine, Place du Mar\'{e}chal de Lattre de Tassigny, 75016 Paris}
\thanks{This article is a short version of a working paper with the same name,
published at the Stockholm School of Economics. It is based on chapter 10 of my Ph.D. dissertation, 
written at the Laboratoire d'\'{e}conom\'{e}trie de l'Ecole polytechnique under the supervision of Sylvain Sorin. 
I am much grateful to him, J\ ¨orgen Weibull, and Larry Samuelson. I also thank two anonymous referees, the editor, 
and seminar audiences at the Maison des Sciences Economiques (Universit\'{e} Paris 1), the Institut Henri Poincar\'{e}, 
the Stockholm School of Economics, Tel-Aviv University, the Technion and the Hebrew University of Jerusalem. All errors and shortcomings are mine.}}
\date{CEREMADE, Universit\'{e} Paris-Dauphine}
\maketitle
\begin{abstract}
We show on a $4 \times 4$ example that many dynamics may eliminate all strategies used in correlated equilibria, and this for an open set of games.
This holds for the best-response dynamics, the Brown-von Neumann-Nash dynamics and any monotonic or weakly
sign-preserving dynamics satisfying some standard regularity conditions. For the replicator dynamics and the
best-response dynamics, elimination of all strategies used in correlated equilibrium is shown to be robust to the
addition of mixed strategies as new pure strategies.\\

JEL classification numbers: C73 ; C72\\

Key-words: correlated equilibrium; evolutionary dynamics; elimination; as-if rationality
\end{abstract}

\section{Introduction}
A number of positive connections have been found between Nash equilibria and the outcome of evolutionary dynamics. For instance, for a wide clas of dynamics, if a solution converges to a point from an interior initial condition, then this point is a Nash equilibrium (Weibull, 1995). However, solutions of evolutionary dynamics need not converge and may cycle away from the set of Nash equilibria (Zeeman, 1980;  Hofbauer and Sigmund, 1998).

Since the set of correlated equilibria of a game is often much larger than its set of Nash equilibria, it might be hoped that correlated equilibria better capture the outcome of evolutionary dynamics than Nash equilibria.  
This hope is reinforced by the recent litterature on adaptive processes converging, in a time-average sense, to the set of correlated equilibria (Hart, 2005).

It was found, however, that there are games for which, for some initial conditions, the replicator dynamics eliminate all strategies belonging to the support of at least one correlated equilibrium (Viossat, 2007a). Thus, only strategies that do not take part in any equilibrium remain, rulling out convergence of any kind of time-average to the set of correlated equilibria.

The purpose of this article is to show, on a $4 \times 4$ example, that elimination of all strategies used in correlated equilibrium does not only occur under the replicator dynamics and for very specific games, but for many dynamics and for an open set of games. We also study the robustness of this result when agents are explicitly allowed to use mixed strategies.

The article is organized as follows. After presenting the framework and notations, we introduce the games we consider and explain the technique used to show that all strategies used in correlated equilibrium are eliminated (section \ref{sec:games}). Sections \ref{sec:posmon}, \ref{sec:BR} and \ref{sec:BNN} deal in turn with monotonic or weakly sign-preserving dynamics, the best-response dynamics and the Brown-von Neumann-Nash dynamics. Section \ref{sec:mixed-as-new-pure} and the appendix show that 
elimination of all strategies used in correlated equilibrium still occurs when agents are explicitly allowed to play mixed strategies. Section \ref{sec:discussion} concludes.

\textbf{Framework and notations. } We study single-population
dynamics in two-player, finite symmetric games. The set of pure
strategies is $I=\{1,2,..,N\}$ and $S_N$ denotes the simplex of
mixed strategies (henceforth, ``the simplex"). Its vertices $\be_i$,
$1 \leq i \leq N$, correspond to the pure strategies of the game. We
denote by $x_i(t)$ the proportion of the population playing strategy
$i$ at time $t$ and by $\bx(t)=(x_1(t),...,x_N(t)) \in S_N$ the
population profile (or mean strategy). We study its evolution under
dynamics of type $\dot{\bx}(t)=f(\bx(t), \bU)$, where
$\bU=(u_{ij})_{1 \leq i,j \leq N}$ is the payoff matrix of the game.
We often skip the indication of time. For every $\bx$ in $S_N$, the
probability distribution on $I \times I$ induced by $\bx$ is denoted
by $\bx \otimes \bx$. If $A$ is a subset of $S_N$, then $conv(A)$ denotes its convex hull.

We assume known the definition of a correlated equilibrium distribution (Aumann, 1974) and, with a slight
abuse of vocabulary, we write throughout correlated equilibrium for correlated equilibrium distribution. A
pure strategy $i$ is {\em used in correlated equilibrium} if there exists a correlated equilibrium $\mu$
under which strategy $i$ has positive marginal probability (since the game is symmetric, whether we restrict
attention to symmetric correlated equilibria or not is irrelevant; see footnote 2 in
(Viossat, 2007a)).
Finally, the pure strategy $i$ is \emph{eliminated} (for a given solution $\bx(\cdot)$ of a given dynamics)
if $x_{i}(t) \to 0$ as $t \to + \infty$.

\section{A family of games with a unique correlated equilibrium}
\label{sec:games}

The games considered in (Viossat, 2007a) were $4 \times 4$ symmetric games with payoff matrix

\begin{equation}
\label{eq:RSPA}
\bU_{\alpha}= \left(
\begin{array}{ccc|c}
0 & -1   & \varepsilon & -\alpha \\
\varepsilon &  0  & -1 & -\alpha \\
-1 & \varepsilon  &  0 & -\alpha \\
\hline
\frac{-1 + \varepsilon}{3} + \alpha & \frac{-1 + \varepsilon}{3} + \alpha & \frac{-1 + \varepsilon}{3} + \alpha & 0
\end{array}
\right)
\end{equation}
with $\varepsilon$ in $]0,1[$, and $0<\alpha<(1-\varepsilon)/3$. The $3 \times 3$ game obtained by omitting the fourth strategy is a Rock-Paper-Scissors game (RPS). This game has a unique Nash equilibrium : $(1/3,1/3,1/3)$, which is also the unique correlated equilibrium. When $\alpha=0$, the fourth strategy of the full game earns the same payoff as $\bn=(1/3,1/3,1/3,0)$, and there is a segment of symmetric Nash equilibria : for every $\bx \in [\bn,\be_4]=\{\lambda \bn + (1-\lambda) \be_4, \lambda \in [0,1]\}$, $(\bx,\bx)$ is a Nash equilibrium. For $\alpha>0$, $\be_4$ earns more than $\bn$, so $(\be_4, \be_4)$ is a strict Nash equilibrium, and the unique correlated equilibrium is $\be_4 \otimes \be_4$. However, for $\alpha$ small enough, the best-response cycle $\be_1 \to \be_2 \to \be_3 \to \be_1$ remains and the corresponding set :
\begin{equation}
\label{44a-eq:def_Gamma}
\Gamma=\{\bx \in S_4 : x_4=0 \mbox{ and } x_1x_2x_3=0\}.
\end{equation}
is asymptotically stable under the replicator dynamics
\begin{equation*}
\dot{x}_{i}(t)=x_{i}(t)\left[ (\bU\bx(t))_{i}-
\bx(t)\cdot\bU\bx(t)\right].
\end{equation*}
It follows that there exist games for which, for an open set of initial conditions, the replicator dynamics eliminate all strategies used in correlated equilibrium (Viossat, 2007a).

This article shows that elimination of all strategies used in correlated equilibrium does not only occur for non-generic games and the replicator dynamics, but for an open set of games and many other  dynamics. This is done by showing that, for many dynamics, there are values of $\alpha$ and $\varepsilon$ such that, for every game in a neighborhood of (\ref{eq:RSPA}):

(i) the unique correlated equilibrium is $\be_4 \otimes \be_4$;

(ii) for an open set of initial conditions, strategy $4$ is eliminated.\\
\noindent Point (i) is the object of the following proposition:
\begin{proposition}
\label{prop:uniquecor}
For every $\varepsilon$ in $]0,1[$ and every $\alpha$ in $]0,(1-\varepsilon)/3[$, every game in the neighborhood of (\ref{eq:RSPA}) has a unique correlated equilibrium: $\be_4 \otimes \be_4$.
\end{proposition}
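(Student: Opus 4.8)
The plan is to prove the stronger statement that \emph{every} correlated equilibrium $\mu$ of any game close to $\bU_{\alpha}$ is concentrated on the single profile $(4,4)$; since every finite game has at least one correlated equilibrium, this simultaneously yields existence and uniqueness of $\be_4\otimes\be_4$. Recall that a correlated equilibrium is a distribution $\mu=(\mu_{ij})$ on $I\times I$ satisfying the no-deviation inequalities $\sum_j \mu_{ij}(u_{ij}-u_{kj})\ge 0$ for the row player and $\sum_i \mu_{ij}(u_{ji}-u_{li})\ge 0$ for the column player, over all strategies. The whole argument is an exercise in linear programming duality: I would exhibit nonnegative multipliers for these inequalities so that their weighted sum is a single inequality $\sum_{i,j}N_{ij}\,\mu_{ij}\ge 0$ in which $N_{44}=0$ and $N_{ij}<0$ for every $(i,j)\ne(4,4)$. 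Since $\mu\ge 0$, each term $N_{ij}\mu_{ij}$ is then nonpositive, so the inequality forces $\mu_{ij}=0$ for all $(i,j)\ne(4,4)$, whence $\mu=\be_4\otimes\be_4$.

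The device that keeps $N_{44}=0$ is to use \emph{only} the constraints whose recommended strategy lies in $\{1,2,3\}$, that is, to put zero weight on every ``deviate away from strategy $4$'' constraint; then the $(4,4)$ coefficient is identically $0$ and the mass carried by $\be_4\otimes\be_4$ is left unconstrained. To fix the remaining weights I would exploit the cyclic symmetry $1\to2\to3\to1$ of the Rock-Paper-Scissors block: assign a weight $p$ to each ``forward'' deviation $i\to i+1$ and a weight $q$ to each ``backward'' deviation $i\to i-1$ (indices taken cyclically in $\{1,2,3\}$), and a weight $r$ to each deviation $i\to 4$, using the same pattern for both players. By symmetry the coefficients collapse to three distinct values, and writing $c=\tfrac{\varepsilon-1}{3}+\alpha$ the requirements $N_{ij}<0$ reduce to $r>0$ together with
\[
p\varepsilon + r(\varepsilon-1-2c)\;<\;q\;<\;p\varepsilon + rc .
\]
Under the hypotheses $0<\alpha<(1-\varepsilon)/3$ one checks that $-1<c<0$ and $\varepsilon-1-2c<0$, and the two bounds are mutually compatible precisely when $c>(\varepsilon-1)/3$, i.e.\ precisely when $\alpha>0$. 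Taking, say, $p=1$ with $r$ small and $q$ in the resulting nonempty interval then produces admissible multipliers.

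I expect the main obstacle to be guessing this aggregation: recognizing that one must discard the deviation-from-$4$ constraints (to free the $(4,4)$ entry) and pick cyclically symmetric weights for the rest, and then seeing that feasibility of the whole system is governed by the single inequality $\alpha>0$. Once the multipliers are in hand the robustness claim is immediate, which is why I would set the argument up with explicit, fixed multipliers rather than by any symmetrization of $\mu$: the coefficients $N_{ij}$ depend continuously on the payoff entries while the multipliers stay fixed, the inequalities $N_{ij}<0$ for $(i,j)\ne(4,4)$ are strict and hence persist under small perturbations of the payoff matrix, and $N_{44}=0$ holds identically because no deviation-from-$4$ constraint is ever used. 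The same weighted inequality therefore annihilates every entry off $(4,4)$ for all games in a neighborhood of $\bU_{\alpha}$, and existence of a correlated equilibrium upgrades this to the assertion that $\be_4\otimes\be_4$ is the unique one.
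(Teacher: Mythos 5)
Your proof is correct, but it takes a genuinely different route from the paper's. The paper disposes of the proposition in two lines by citation: the set of games with a unique correlated equilibrium is open (Viossat, 2007b), game (\ref{eq:RSPA}) has a unique correlated equilibrium (a fact imported from prior work), and $\be_4 \otimes \be_4$ is clearly a correlated equilibrium of every nearby game, so uniqueness transfers to a neighborhood. You instead build an explicit dual certificate, and your computations check out: with weight $p$ on each deviation $i \to i+1$, $q$ on $i \to i-1$, $r$ on $i \to 4$ (for $i \in \{1,2,3\}$, both players, zero weight on deviations from strategy $4$), the aggregated coefficients collapse to $N_{ii}=2(q-p\varepsilon-rc)$ on the RPS diagonal, $N_{ij}=p\varepsilon-q+r(\varepsilon-1-2c)$ on the off-diagonal block entries, $N_{i4}=N_{4j}=-r\alpha$, and $N_{44}=0$, so your feasibility condition $p\varepsilon+r(\varepsilon-1-2c)<q<p\varepsilon+rc$ is exactly right, and the interval is nonempty precisely when $c>(\varepsilon-1)/3$, i.e.\ when $\alpha>0$. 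One small point you should make explicit: the multipliers must be nonnegative, which holds since for $p=1$ and $r$ small the admissible interval for $q$ sits near $\varepsilon>0$. The trade-off between the two approaches is clear. Your argument is self-contained and constructive; it simultaneously reproves the base-game uniqueness that the paper takes from Viossat (2007a), and robustness is immediate and quantitative, since the coefficients $N_{ij}$ are continuous (indeed linear) in the payoff entries, the inequalities $N_{ij}<0$ for $(i,j)\neq(4,4)$ are strict, and $N_{44}=0$ holds identically because no recommendation-$4$ constraint is used, so a single fixed certificate works uniformly on a neighborhood. The paper's route buys brevity and generality: the openness theorem it invokes applies to any game with a unique correlated equilibrium, not just this family, at the cost of resting on two external results rather than an explicit computation.
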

\begin{proof}
Since the set of games with a unique correlated equilibrium is open (Viossat, 2007b) and game 
(\ref{eq:RSPA}) has a unique correlated equilibrium, it follows that every game in a neighborhood of (\ref{eq:RSPA}) has a unique correlated equilibrium. Since $\be_4 \otimes \be_4$ is clearly a correlated equilibrium of every game sufficiently close to (\ref{eq:RSPA}), the result follows.
\end{proof}

To prove (ii), a first method is to show that in (\ref{eq:RSPA}), and every nearby game, the cyclic attractor of the underlying RPS game is still asymptotically stable. This is the method we use for monotonic dynamics and for weakly sign-preserving dynamics. When in the underlying RPS game the attractor is not precisely known, but the Nash equilibrium is repelling, another method may be used. It consists in showing that there is a tube surrounding the segment $[\bn,\be_4]$ which repels solutions and such that outside of this tube, $x_4$ decreases along all trajectories. We use this method for the Brown-von-Neumann-Nash dynamics. For the best-response dynamics, both methods work. 

\section{Monotonic or weakly sign-preserving dynamics}
\label{sec:posmon}
We first need some definitions. Consider a dynamics of the form
\begin{equation}
\label{eq:gendyn}
\dot{x}_i=x_i g_i(\bx)
\end{equation}
where the $C^{1}$ functions $g_i$ have the property that $\sum_{i
\in I} x_i g_i(\bx)=0$ for all $\bx$ in $S_4$, so that the simplex
$S_4$ and its boundary faces are invariant. Such a dynamics is {\em monotonic} if the growth rates of the
different strategies are ranked according to their
payoffs:\footnote{This property goes under various names in the
literature: {\em relative monotonicity} in (Nachbar, 1990), {\em
order-compatibility} of pre-dynamics in (Friedman, 1991), {\em
monotonicity} in (Samuelson and Zhang, 1992), which we follow, and
{\em payoff monotonicity} in (Hofbauer and Weibull, 1996).}
\begin{equation*}  
g_i(\bx) > g_j(\bx) \Leftrightarrow (\bU\bx)_i > (\bU\bx)_j
\hspace{0.5cm} \forall i \in I,\forall j \in I.
\end{equation*}
It is {\em weakly sign-preserving} (WSP)  (Ritzberger and Weibull, 1995) if whenever
a strategy earns below average, its growth rate is negative:
\begin{equation*}
\left[(\bU\bx)_i < \bx \cdot \bU\bx\right] \Rightarrow g_i(\bx)<0.
\end{equation*}
Dynamics\footnote{Instead of dynamics of type (\ref{eq:gendyn}),
Ritzberger and Weibull (1995) consider dynamics of the more general
type $\dot{x}_i=h_i(\bx)$, that need not leave the faces of the
simplex positively invariant. Thus, we only consider a subclass of
their WSP dynamics.} of type (\ref{eq:gendyn}) implicitly
depend on the payoff matrix $\bU$. Thus, a more correct writing of
(\ref{eq:gendyn}) would be: $\dot{x}_i=x_i g_i(\bx,\bU)$. 
Such a dynamics \textit{depends continuously on the payoff matrix} if, for every $i$
in $I$, $g_i$ depends continuously on $\bU$. A prime example of a
dynamics of type (\ref{eq:gendyn}) which is monotonic, WSP, and
depends continuously on the payoff matrix is the replicator
dynamics.

Finally, a closed subset $C$ of $S_4$ is \emph{asymptotically stable} if it is both:

(a) Lyapunov stable:  for every neighborhood $N_1$ of $C$, there
exists a neighborhood $N_2$ of $C$ such that, for every initial
condition $\bx(0)$ in $N_2$, $\bx(t) \in N_1$ for all $t \geq 0$.

(b) locally attracting: there exists a neighborhood $N$ of $C$ such
that, for every initial condition $\bx(0)$ in $N$, $\min_{c \in C} ||\bx(t)-c|| \to_{t \to +\infty} 0$
(where $||\cdot||$ is any norm on $\R^I$).
\begin{proposition} \label{prop:genREP}
Fix a monotonic or WSP dynamics (\ref{eq:gendyn}) that depends continuously on the payoff matrix. For every $\alpha$ in $]0,1/3[$, there exists $\varepsilon>0$ such that
for every game in the neighborhood of (\ref{eq:RSPA}), the set $\Gamma$ defined by (\ref{44a-eq:def_Gamma}) is asymptotically
stable.
\end{proposition}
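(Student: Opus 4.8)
The plan is to realize $\Gamma$ as the carrier of a robust heteroclinic cycle and to prove its asymptotic stability by controlling separately the dynamics tangent to the invariant face $F=\{\bx\in S_4:x_4=0\}$ and the motion transverse to it. First I would record the skeleton of the cycle. Since dynamics of type (\ref{eq:gendyn}) leave every face of $S_4$ invariant for \emph{every} payoff matrix, the vertices $\be_1,\be_2,\be_3$ are rest points lying on $\Gamma$ and each edge $[\be_i,\be_{i+1}]$ is invariant, both for (\ref{eq:RSPA}) and for all nearby games. On the open edge $[\be_1,\be_2]$ one computes $(\bU\bx)_2-(\bU\bx)_1=\varepsilon x_1+x_2>0$, so monotonicity (resp.\ WSP, via $(\bU\bx)_1<\bx\cdot\bU\bx$) forces $g_2>0>g_1$ there; hence the flow runs monotonically from $\be_1$ to $\be_2$ with no interior rest point, and similarly around the triangle. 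Thus $\Gamma$ is exactly the robust heteroclinic cycle $\be_1\to\be_2\to\be_3\to\be_1$, and its asymptotic stability reduces to (a) attraction within $F$ and (b) contraction of $x_4$ near the cycle.

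The heart of the matter is a sign analysis of the growth rates at the vertices, where an orbit approaching the cycle spends asymptotically all its time. At $\be_i$ the identity $\sum_j x_jg_j\equiv 0$ gives $g_i(\be_i)=0$, so the relevant eigenvalues are the $g_j(\be_i)$, $j\neq i$. Reading off (\ref{eq:RSPA}), at $\be_1$ strategy $2$ earns $\varepsilon>0$ (the outgoing direction), strategy $3$ earns $-1<0$ (the incoming, contracting direction), and strategy $4$ earns $\tfrac{\varepsilon-1}{3}+\alpha$. Monotonicity/WSP then give $g_3(\be_1)<0$ and, since $\tfrac{\varepsilon-1}{3}+\alpha<0$ exactly when $\alpha<\tfrac{1-\varepsilon}{3}$, also $g_4(\be_1)<0$; the same holds cyclically. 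These two strict inequalities $g_{i-1}(\be_i)<0$ and $g_4(\be_i)<0$ are the contraction I need, and they persist as $\varepsilon\to0$ because the payoffs $-1$ and $-1/3+\alpha$ stay bounded away from $0$ precisely when $\alpha<1/3$. For tangential attraction I would invoke the standard criterion for a simple heteroclinic cycle: the cycle attracts within $F$ once total contraction beats total expansion, i.e.\ $\prod_i\bigl(-g_{i-1}(\be_i)/g_{i+1}(\be_i)\bigr)>1$. For a monotonic dynamics this is automatic for small $\varepsilon$, since equal payoffs force equal growth rates, so the outgoing rate $g_{i+1}(\be_i)\to0$ as $\varepsilon\to0$ while the incoming rate stays bounded away from $0$, sending each ratio to $+\infty$.

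Transverse contraction I would obtain from the same vertex signs: because $g_4(\be_i)<0$ at all three vertices and an orbit shadowing $\Gamma$ spends a time-fraction tending to $1$ near the vertices, the time-average of $g_4$ along it is negative, so $x_4(t)=x_4(0)\exp\!\big(\int_0^t g_4\,ds\big)\to0$. Combining (a) and (b) through the Poincar\'e return map of the cycle yields both Lyapunov stability and local attraction, i.e.\ asymptotic stability of $\Gamma$ for (\ref{eq:RSPA}) with $\varepsilon$ small. Robustness to nearby games is then essentially free: every inequality used ($g_{i-1}(\be_i)<0$, $g_4(\be_i)<0$, and the ratio product $>1$) is strict and, since $g$ depends continuously on $\bU$, is preserved on a neighborhood of (\ref{eq:RSPA}); and $\Gamma$ is unchanged because face-invariance holds for all payoff matrices.

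I expect the genuine difficulty to be the tangential attraction in the WSP case. Unlike monotonicity, WSP constrains only below-average strategies, so the outgoing rate $g_{i+1}(\be_i)$ need not vanish as $\varepsilon\to0$ and the eigenvalue-ratio shortcut is unavailable; moreover the saddles may fail to be hyperbolic. The remedy I would pursue exploits that $F$ is two-dimensional: at an interior rest point no strategy is below average, so all earn the average, which in the RPS game forces the center $(1/3,1/3,1/3)$, whence by Poincar\'e--Bendixson every interior $\omega$-limit set is the center, an interior periodic orbit, or $\Gamma$. Ruling out attraction to the center and the existence of interior periodic orbits for small $\varepsilon$ — so that all interior orbits accumulate on $\Gamma$ — is the delicate step, after which the transverse estimate above upgrades in-face attraction to asymptotic stability in $S_4$.
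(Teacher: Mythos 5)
Your monotonic-case argument is essentially the paper's. The paper also reduces everything to the vertex growth rates and invokes Hofbauer's (1994) stability criterion for heteroclinic cycles: existence of $\bp>0$ with $\bC\bp<0$ for the characteristic matrix $\bC=(g_j(\be_i))$, together with asymptotic stability of $\Gamma$ within the boundary of $S_4$. Its key lemma is exactly your pair of sign conditions, in the sharper per-vertex form $g_4(\be_i)<0$ and $0<g_{i+1}(\be_i)<-g_{i-1}(\be_i)$, proved as you propose: under monotonicity equal payoffs force equal growth rates, so at $\varepsilon=0$ one has $g_{i+1}(\be_i)=0>g_{i-1}(\be_i)$, and continuity in $\bU$ preserves the strict inequalities for small $\varepsilon>0$ and for all nearby games. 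Your product criterion $\prod_i\bigl(-g_{i-1}(\be_i)/g_{i+1}(\be_i)\bigr)>1$ with a small transverse weight on strategy $4$ is the same mechanism as taking $p_1=p_2=p_3=1$ and $p_4>0$ small in $\bC\bp<0$ (the paper's per-vertex inequality is what makes this choice work). One point you skip: ``asymptotically stable within the boundary'' also requires the faces containing $\be_4$, where the paper checks that $\be_{i+1}$ is a sink and $\be_i$ a saddle on the face spanned by $\be_i,\be_{i+1},\be_4$.

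The genuine gap is the WSP case, which you correctly flag but leave open, and your proposed Poincar\'e--Bendixson remedy is the wrong tool: you would need to exclude interior periodic orbits of an \emph{arbitrary} WSP dynamics in the face $\{x_4=0\}$ (there is no reason this is tractable), global convergence to $\Gamma$ is much more than the local stability actually needed, and by itself it would not deliver Lyapunov stability. The paper instead closes the gap \emph{locally}, so that the characteristic-matrix proof goes through verbatim. Two steps, say at $i=2$. First, at $\varepsilon=0$, at every point in the relative interior of the edge $[\be_1,\be_2]$ strategy $3$ earns strictly below average, so $g_3<0$ there and by continuity $g_3(\be_2)\leq 0$; since $g_1(\be_2)<0$ (strategy $1$ is below average at $\be_2$), this gives $g_3(\be_2)<-g_1(\be_2)$, which persists for small $\varepsilon>0$ by continuity in $\bU$. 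Second, for the missing positivity $g_3(\be_2)>0$ when $\varepsilon>0$: for small $\lambda>0$ and then small $\mu>0$, at $\bx=(\lambda\mu,\,1-\mu-\lambda\mu,\,\mu,\,0)$ strategy $3$ is the \emph{unique} strategy earning weakly above average, so WSP gives $g_i(\bx)<0$ for $i\neq 3$, and the identity $\sum_i x_ig_i(\bx)=0$ forces $\lambda\mu\,g_1(\bx)+\mu\,g_3(\bx)>0$, i.e.\ $g_3(\bx)>-\lambda g_1(\bx)$; letting $\mu\to 0$ yields $g_3(\be_2)\geq-\lambda g_1(\be_2)>0$. Incidentally, combining the two steps shows that the outgoing rate \emph{does} vanish as $\varepsilon\to 0^{+}$ even for WSP dynamics, contrary to your stated worry --- though you are right that WSP does not give this for free, which is precisely why this extra argument is the only place where the paper's WSP proof differs from the monotonic one.
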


\noindent \textbf{Proof for monotonic dynamics}.
Consider a monotonic dynamics (\ref{eq:gendyn}). Under this dynamics, for every game in the
neighborhood of (\ref{eq:RSPA}), the set $\Gamma$ is a heteroclinic cycle. That is, a set consisting of
saddle rest points and the saddle orbits connecting these rest points. Thus we may use the asymptotic
stability's criteria for heteroclinic cycles developed by
\href{http://homepage.univie.ac.at/Josef.Hofbauer/hetcy.htm}{Hofbauer (1994)} (a more accessible reference
for this result is theorem 17.5.1 in (Hofbauer and Sigmund, 1998)). Specifically, associate with the
heteroclinic cycle $\Gamma$ its so-called characteristic matrix. That is, the $3 \times 4$ matrix whose entry
in row $i$ and column $j$ is $g_j(\be_i)$ (for $i \neq j$, this is the eigenvalue in the direction of $\be_j$
of the linearization of the vector field at $\be_i$):
$$
\begin{array}{c|cccc}
& 1 & 2 & 3 & 4 \\ \hline 
\be_1 & 0 & g_2(\be_1) & g_3(\be_1) & g_4(\be_1)\\
\be_2 & g_1(\be_2) & 0 & g_3(\be_2) & g_4(\be_2) \\
\be_3 & g_1(\be_3) & g_2(\be_3) & 0 & g_4(\be_3)
\end{array}
$$
($g_i(\be_i)=0$ because $\be_i$ is a rest point of
(\ref{eq:gendyn})).

Call $\bC$ this matrix. If $\bp$ is a real vector, let $\bp<0$ (resp. $\bp>0$) mean that all coordinates of
$\bp$ are negative (resp. positive). \href{http://homepage.univie.ac.at/Josef.Hofbauer/hetcy.htm}{Hofbauer
(1994)} shows that if the following conditions are satisfied, then $\Gamma$ is asymptotically stable:
\begin{equation}
\label{cond2} \mbox{There exists a vector $\bp$ in $\mathbb{R}^{4}$
such that $\bp >0$ and $\bC \bp<0$. } \hfill
\end{equation}
\begin{equation}
\label{cond1} \mbox{$\Gamma$ is asymptotically stable within the
boundary of $S_4$. }\footnote{That is, for each proper face
(subsimplex) $F$ of $S_4$, if $\Gamma \cap F$ is nonempty, then
it is asymptotically stable for the dynamics restricted to $F$.}
\end{equation}
Therefore, to prove proposition \ref{prop:genREP}, it is enough to show that for every
$\alpha$ in $]0,1/3[$, there exists $\varepsilon>0$ such that, for
every game in the neighborhood of (\ref{eq:RSPA}), conditions
(\ref{cond2}) and (\ref{cond1}) are satisfied. We begin with a
lemma. In the remainder of this section, $i \in \{1,2,3\}$ and $i-1$ and $i+1$ are counted modulo $3$.
\begin{lemma}
\label{lm:paymon}
For every $0 < \alpha < 1/3$, there exists $\varepsilon>0$ such that in game (\ref{eq:RSPA}), for every $i$ in $\{1,2,3\}$,
\begin{equation}
\label{eq:payoffmon-lemma}
g_4(\be_i)<0 \, \mbox{ and } \, 0< g_{i+1}(\be_i)<-g_{i-1}(\be_i).
\end{equation}
\end{lemma}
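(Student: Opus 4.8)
The plan is to read all three inequalities off the payoff vector at each vertex $\be_i$, converting payoff comparisons into growth-rate comparisons via monotonicity, and to obtain the single genuinely quantitative inequality by a continuity argument in the limit $\varepsilon \to 0$. First I would record the payoffs at $\be_i$. Since $(\bU\be_i)_k = u_{ki}$ and $u_{ii}=0$, column $i$ of (\ref{eq:RSPA}) gives, for each $i \in \{1,2,3\}$: strategy $i$ earns $0$, strategy $i+1$ earns $\varepsilon$, strategy $i-1$ earns $-1$, and strategy $4$ earns $\frac{-1+\varepsilon}{3}+\alpha$. Because $\be_i$ is a rest point, $g_i(\be_i)=0$, and monotonicity then forces each $g_k(\be_i)$ to have the same sign as $(\bU\be_i)_k - (\bU\be_i)_i = (\bU\be_i)_k$. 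Two of the three required inequalities are now immediate: since $\varepsilon>0$, strategy $i+1$ earns strictly more than strategy $i$, so $g_{i+1}(\be_i)>0$; and since $\alpha<1/3$ I may impose $\varepsilon < 1-3\alpha$, which makes $\frac{-1+\varepsilon}{3}+\alpha<0$, whence $g_4(\be_i)<0$.

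The remaining inequality $g_{i+1}(\be_i) < -g_{i-1}(\be_i)$ is the crux, and it is the one place where monotonicity alone does not suffice: monotonicity pins down the signs and the ordering of the growth rates but says nothing about their magnitudes, whereas here I must compare the size of the (positive) number $g_{i+1}(\be_i)$ with the size of the (negative) number $g_{i-1}(\be_i)$. The hard part will therefore be to control magnitudes, and for this I would invoke continuous dependence of the $g_k$ on $\bU$ together with the degenerate case $\varepsilon=0$. When $\varepsilon=0$, strategies $i$ and $i+1$ earn the same payoff ($0$) against $\be_i$, so monotonicity forces $g_{i+1}(\be_i)=g_i(\be_i)=0$, while strategy $i-1$ still earns $-1<0$, so $g_{i-1}(\be_i)$ takes some strictly negative value $c_i$. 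By continuity in $\bU$, as $\varepsilon \to 0^+$ we have $g_{i+1}(\be_i) \to 0$ and $-g_{i-1}(\be_i) \to -c_i>0$; hence for all sufficiently small $\varepsilon>0$, $g_{i+1}(\be_i) < -c_i/2 < -g_{i-1}(\be_i)$, as desired.

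Finally I would collect the constraints. Each vertex $\be_1,\be_2,\be_3$ imposes finitely many conditions, each valid for $\varepsilon$ in an interval of the form $(0,\varepsilon_i)$ (namely $\varepsilon>0$ for positivity of $g_{i+1}(\be_i)$, $\varepsilon<1-3\alpha$ for negativity of $g_4(\be_i)$, and a smallness threshold from the continuity step), so choosing $\varepsilon$ below the minimum of these finitely many thresholds yields a single $\varepsilon>0$ for which (\ref{eq:payoffmon-lemma}) holds at all three vertices simultaneously. Note that I do not need the dynamics to respect the game's cyclic symmetry: the $c_i$ may differ across $i$, but each is strictly negative and each vertex is treated on its own before intersecting the admissible $\varepsilon$-ranges. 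The main obstacle is precisely the magnitude comparison in the third inequality; the rest is a direct translation of the column entries of (\ref{eq:RSPA}) through monotonicity.
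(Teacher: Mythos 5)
Your proof is correct and takes essentially the same route as the paper's: both read the signs $g_{i+1}(\be_i)>0$ and $g_4(\be_i)<0$ off the payoff ranking at $\be_i$ via monotonicity (using $g_i(\be_i)=0$), and both obtain the magnitude comparison $g_{i+1}(\be_i)<-g_{i-1}(\be_i)$ by observing that it holds strictly at $\varepsilon=0$, where $g_{i+1}(\be_i)=0$ and $g_{i-1}(\be_i)<0$, and then invoking continuous dependence of the dynamics on the payoff matrix. Your explicit handling of the threshold $\varepsilon<1-3\alpha$ for $g_4(\be_i)<0$ and of the finitely many smallness constraints merely spells out what the paper's standing parameter restriction $0<\alpha<(1-\varepsilon)/3$ already encodes.
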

\begin{proof}  
For $\varepsilon > 0$, at the vertex $\be_i$, the payoff of strategy
$4$ (resp. $i+1$) is strictly smaller (greater) than the payoff of
strategy $i$. Since the growth rate of strategy $i$ at $\be_i$ is
$0$, this implies by monotonicity $g_4(\be_i)<0$ (resp.
$g_{i+1}(\be_i)>0$). It remains to show that
$g_{i+1}(\be_i)<-g_{i-1}(\be_i)$. For $\varepsilon=0$, we have:
$(\bU\be_i)_i=(\bU\be_i)_{i+1}>(\bU\be_i)_{i-1}$ so that
$0=g_{i+1}(\be_i)>g_{i-1}(\be_i)$. Therefore
$g_{i+1}(\be_i)<-g_{i-1}(\be_i)$ and since the dynamics depends
continuously on the payoff matrix, this still holds for small positive $\varepsilon$.
\end{proof}\\
We now prove proposition \ref{prop:genREP}.  Fix $\alpha$ and
$\varepsilon$ as in lemma \ref{lm:paymon}. Note that since the dynamics
we consider depends continuously on the payoff matrix, there exists
a neighborhood of the game (\ref{eq:RSPA}) in which the strict
inequalities (\ref{eq:payoffmon-lemma}) still hold. Thus, to prove
proposition \ref{prop:genREP}, it suffices to show that
(\ref{eq:payoffmon-lemma}) implies (\ref{cond2}) and (\ref{cond1}).\\

\noindent \textbf{(\ref{eq:payoffmon-lemma})  $\Rightarrow$ (\ref{cond2}) : } It
follows from (\ref{eq:payoffmon-lemma}) that if $p_1=p_2=p_3=1$ and $p_4>0$, then $\bC \bp<0$. 
Therefore, condition (\ref{cond2}) is satisfied.\\

\noindent \textbf{(\ref{eq:payoffmon-lemma}) $\Rightarrow$  (\ref{cond1}) : } 
We use again characteristic matrices. Let $\hat{\bC}$ denote the $3
\times 3$ matrix obtained from $\bC$ by omitting the fourth column. This corresponds to the characteristic
matrix of $\Gamma$, when viewed as a heteroclinic cycle of the underlying $3 \times 3$ RPS game. In this RPS
game, the set $\Gamma$ is trivially asymptotically stable on the relative boundary of $S_3$ ($\Gamma$
\emph{is} the relative boundary!). Furthermore, for $\hat{\bp}=(1/3,1/3,1/3)>0$, the last inequality in
(\ref{eq:payoffmon-lemma}) implies that $\hat{\bC}\hat{\bp}<0$. Therefore, it follows from theorem 1 of
\href{http://homepage.univie.ac.at/Josef.Hofbauer/hetcy.htm}{Hofbauer (1994)} that, in the $4 \times 4$
initial game, $\Gamma$ is asymptotically stable on the face spanned by $\be_1,\be_2,\be_3$. Asymptotic
stability on the face spanned by $\be_i,\be_{i+1},\be_4$ follows easily from the following facts :
on this face, $\be_{i+1}$ is a sink, $\be_i$ a saddle, every solution starting in $]\be_{i},\be_{i+1}] $ converges to $\be_{i+1}$, and $\bx(t)$ depends smoothly on $\bx(0)$. This concludes the proof.{\ \rule{0.5em}{0.5em} \vspace{0.1 cm}}\\

\noindent \textbf{Proof of proposition \ref{prop:genREP} for WSP dynamics}. 
The proof is exactly the same, except for the proof of lemma \ref{lm:paymon}, which is as follows:  Fix a WSP dynamics (\ref{eq:gendyn}).
For concreteness, set $i=2$. At $\be_2$, strategy $4$ earns less
than average. Therefore $g_4(\be_2)<0$. Now consider the case
$\varepsilon=0$: at every point $\bx$ in the relative interior of the
edge $[\be_1, \be_2]$, strategy $3$ earns strictly less than average
hence its growth rate is negative. By continuity at $\be_2$ this
implies $g_3(\be_2)\leq 0$. Since at $\be_2$, strategy $1$ earns
strictly less than average, it follows that $g_1(\be_2)<0$, hence
$g_3(\be_2)<-g_1(\be_2)$. Since the dynamics depends continuously on
the payoff matrix, this still holds for small positive $\varepsilon$.

To establish (\ref{eq:payoffmon-lemma}), it suffices to show that
$g_3(\be_2)$ is positive for every sufficiently small positive
$\varepsilon$. Let $\varepsilon>0$. If $\lambda>0$ is sufficiently small
then, for all $\mu>0$ small enough, the unique strategy which earns
weakly above average at $\bx=(\lambda\mu,1-\mu -\lambda\mu,\mu,0)$
is strategy $3$, hence $g_i(\bx) < 0$ for $i \neq 3$. Since $\sum_{1
\leq i \leq 4} x_ig_i(\bx)=0$, it follows that $x_1g_1(\bx)
+x_3g_3(\bx)>0$, hence $\lambda\mu g_1(\bx) + \mu g_3(\bx) > 0$,
hence $g_3(\bx)>- \lambda g_1(\bx)$. Letting $\mu$ go to zero, we
obtain $g_3(\be_2)\geq - \lambda g_1(\be_2)>0$ ($g_1(\be_2)<0$ was proved in the previous paragraph). {\ \rule{0.5em}{0.5em} \vspace{0.1 cm}}
\section{Best-response dynamics} \label{sec:BR}
\subsection{Main result} The best-response dynamics (Gilboa and Matsui, 1991; Matsui, 1992) is
given by the differential inclusion:
\begin{equation}
\label{eq:defBR}
\dot{\bx}(t) \in BR(\bx(t))-\bx(t),
\end{equation}
where $BR(\bx)$ is the set of best responses to $\bx$:
$$BR(\bx)=\{\mbox{$\textbf{y}$} \in S_N: \by \cdot \bU\bx
=\max_{\bz \in S_N} \bz \cdot \bU \bx\}.$$
A solution $\bx(\cdot)$ of the best-response dynamics is an
absolutely continuous function satisfying (\ref{eq:defBR}) for
almost every $t$. For the games and the initial conditions that we will consider, there is a unique
solution starting from each initial condition.\footnote{We focus on
forward time and never study whether a solution is uniquely
defined in backward time.}

Consider a $4 \times 4$ symmetric game with payoff matrix $\bU$. Let
\begin{equation}\label{eq:BR-def-V-and-W}
V(\bx):=\max_{1 \leq i \leq 3}\left[(\bU\bx)_i-\sum_{1 \leq i \leq
4} u_{ii} x_i\right] \mbox{ and } W(\bx):=\max(x_4,|V(\bx)|).
\end{equation}
For every game sufficiently close to (\ref{eq:RSPA}), the set
\begin{equation}
\label{eq:def-ST}
ST:=\{\bx \in S_4: W(\bx)=0\} 
\end{equation}
is a triangle, which, following Gaunersdorfer and Hofbauer (1995),
we call the Shapley triangle. 
\begin{proposition}
\label{prop:convBR}
For every game sufficiently close to (\ref{eq:RSPA}), if strategy $4$ is not a best response to
$\bx(0)$, then for all $t \geq 0$, $\bx(t)$ is uniquely
defined, and $\bx(t)$ converges to the Shapley triangle
(\ref{eq:def-ST}) as $t \to +\infty$. 
\end{proposition}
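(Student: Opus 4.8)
The plan is to use $W$ as a Lyapunov function and to show that along the solution $W(\bx(t))$ decays exponentially to $0$; since $ST=\{W=0\}$ and $S_4$ is compact, this forces $\bx(t)$ to approach $ST$. The whole argument rests on the observation that, as long as strategy $4$ is not a best response, both $x_4$ and $V$ obey the elementary linear equation $\dot z=-z$, so the only real work is to show that strategy $4$, not being a best response at time $0$, never becomes one.

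First I would record the two derivative computations, valid on any interval during which $4$ is not a best response and the best reply is the pure strategy $\be_i$ with $i\in\{1,2,3\}$. Since then the best-reply target $\by=\be_i$ has $y_4=0$, the fourth component of (\ref{eq:defBR}) reads $\dot x_4=-x_4$, whence $x_4(t)=x_4(0)e^{-t}$. For $V$, write $c(\bx)=\sum_j u_{jj}x_j$, so that $V=\max_{1\le i\le 3}(\bU\bx)_i-c$. On a smooth piece the maximizing index coincides with the best-reply vertex $\by=\be_i$, and the envelope identity gives $\tfrac{d}{dt}\max_{1\le i\le 3}(\bU\bx)_i=(\bU\be_i)_i-(\bU\bx)_i=u_{ii}-(\bU\bx)_i$, while $\dot c=u_{ii}-c$; the terms $u_{ii}$ cancel, leaving $\dot V=-V$ and $V(t)=V(0)e^{-t}$. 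It is worth stressing that this cancellation — and hence the clean equation $\dot V=-V$ — is exactly why $V$ was normalized by $c$: it holds verbatim for every payoff matrix, not just (\ref{eq:RSPA}), which is what makes the conclusion robust to perturbations. Granting for a moment that $4$ is never a best response, the two formulas give $W(\bx(t))=\max(x_4(t),|V(t)|)=W(\bx(0))\,e^{-t}\to 0$, and the proposition follows.

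The main obstacle, and the heart of the proof, is the forward invariance of the region in which $4$ is not a best response. Set $V_4(\bx):=(\bU\bx)_4-c(\bx)$, so that $4$ is a best response iff $V_4\ge V$, and put $\phi:=V-V_4$; I must show $\phi$ stays positive. A computation parallel to the one above gives, for almost every $t$, $\dot\phi=-\phi-(u_{4i}-u_{ii})$, where $i\in\{1,2,3\}$ is the current best reply. For game (\ref{eq:RSPA}) one has $u_{ii}=0$ and $u_{4i}=\tfrac{\varepsilon-1}{3}+\alpha<0$ (using $\alpha<(1-\varepsilon)/3$), so $u_{ii}-u_{4i}=\tfrac{1-\varepsilon}{3}-\alpha>0$; by continuity $m:=\min_{i\in\{1,2,3\}}(u_{ii}-u_{4i})>0$ for every nearby game. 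Hence $\dot\phi\ge-\phi+m$ a.e., and comparison with $\dot\psi=-\psi+m$, $\psi(0)=\phi(0)>0$, whose solution $\psi(t)=m+(\phi(0)-m)e^{-t}$ is bounded below by $\min(\phi(0),m)>0$, yields $\phi(t)\ge\psi(t)>0$ for all $t\ge0$. Thus $4$ is never a best response, which closes the bootstrap: the derivative identities then hold on all of $[0,+\infty)$ and $W(\bx(t))=W(\bx(0))e^{-t}$.

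It remains to address existence, uniqueness, and the differential-inclusion technicalities. Near (\ref{eq:RSPA}) the best reply is a unique pure strategy off a finite union of switching hyperplanes, so a solution is a concatenation of line segments each aimed at the current best-reply vertex; since the argument above keeps the solution in the region where $4$ is not a best response and away from $\be_4$, I would invoke the standard well-posedness of the best-response dynamics for Rock--Paper--Scissors--type games (Gaunersdorfer and Hofbauer, 1995) to conclude that the solution is uniquely defined for all $t\ge0$. Throughout, the relations $\dot V=-V$, $\dot x_4=-x_4$ and the inequality $\dot\phi\ge-\phi+m$ need only hold for almost every $t$ — they may fail on the measure-zero set of switching times, where $\bx(t)$ remains continuous — which is all that the comparison and integration arguments require.
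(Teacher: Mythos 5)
Your derivative computations are right ($\dot{x}_4=-x_4$ and $\dot{V}=-V$ on any interval where the solution points at a vertex $\be_i$, $i\in\{1,2,3\}$ --- the same identities, for the same reason, as in the paper), and your barrier argument is a genuinely different replacement for the paper's improvement principle (lemma \ref{lm:BR-improvement-princ}): from $\dot{\phi}=(u_{ii}-u_{4i})-\phi\geq m-\phi$ you get $\phi(t)\geq\min(\phi(0),m)>0$, so strategy $4$ never becomes a best response. That step is correct, quantitative, and it isolates the right condition $m=\min_{i\leq 3}(u_{ii}-u_{4i})>0$ on nearby games.

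However, there is a genuine gap exactly where you write that you ``would invoke the standard well-posedness of the best-response dynamics for RPS-type games''. Everything in your proof presupposes that on all of $[0,+\infty)$ the solution is a concatenation of segments aimed at vertices in $\{\be_1,\be_2,\be_3\}$, with switching times that do not accumulate. This is not a citable off-the-shelf fact: it is the actual content of the paper's claim \ref{cl:Brproof2}, proved via the return map (\ref{eq:notaccumulate}), and it is where the quantitative condition $\alpha_1\alpha_2\alpha_3>\beta_1\beta_2\beta_3$ (for (\ref{eq:RSPA}), $\varepsilon^3<1$) enters; Gaunersdorfer and Hofbauer establish the analogous statements for $3\times 3$ RPS games by just this sort of computation, not by a general theorem that covers the present $4\times 4$ inclusion. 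Concretely, two scenarios are left open. First, at a switching time the inclusion (\ref{eq:defBR}) a priori admits solutions that point into $conv\left(\{\be_i,\be_{i+1}\}\right)$ or slide along the tie surface; the paper rules this out (and gets uniqueness) from the observation that $i+1$ strictly dominates $i$ in the game restricted to $\{i,i+1\}\times\{i,i+1\}$, so the crossing is transversal and the new best reply is immediately unique --- your ``unique off a finite union of hyperplanes'' says nothing about behaviour \emph{on} the hyperplanes, which is precisely where nonuniqueness would arise. Second, if switching times accumulated at a finite $T^{*}$, your almost-everywhere identities would describe the solution only up to $T^{*}$, and both uniqueness and the decay of $W$ beyond $T^{*}$ would be unproven; the ``uniquely defined for all $t\geq 0$'' half of the proposition is exactly this claim. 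Relatedly, you never use the fact that $\be_4$ strictly dominates $(1/3,1/3,1/3,0)$, which the paper needs in order to exclude $BR(\bx)=conv\left(\{\be_1,\be_2,\be_3\}\right)$; without it even $\bx(0)$ could lie on the three-way tie set with no admissible pure direction. The good news is that your $\phi$-bound can be leveraged to close the accumulation gap without the paper's return map: given the cyclic switching order, accumulation at $T^{*}$ would force a three-way payoff tie at $\bx(T^{*})$ while $\phi(T^{*})\geq\min(\phi(0),m)>0$, i.e.\ $BR(\bx(T^{*}))=conv\left(\{\be_1,\be_2,\be_3\}\right)$, contradicting the domination fact --- but writing this out still requires the dominance and transversality ingredients you skipped, so as it stands the proof is incomplete.
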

\begin{proof}
We begin with a lemma, which is the continuous time version of the improvement principle of
\href{http://www.sfb504.uni-mannheim.de/publications/dp97-12.pdf#search=\%22Fictitious\%20play\%20and\%20no\%20cycling\%20condition\%22}{
Monderer and Sela (1997)}:
\begin{lemma}[Improvement principle]
\label{lm:BR-improvement-princ} Let $t_1 < t_2$, let $\bb$ be a best
response to $\bx(t_1)$ and let $\bb' \in S_4$. Assume that
$\dot{\bx}=\bb-\bx$ (hence the solution points towards $\bb$) for
all $t$ in $]t_1,t_2[$. If $\bb'$ is a best response to $\bx(t_2)$
then $\bb' \cdot \bU \bb \geq \bb \cdot \bU \bb$, with strict
inequality if $\bb'$ is not a best response to $\bx(t_1)$.
\end{lemma}
\noindent \textbf{Proof of lemma \ref{lm:BR-improvement-princ}.}
Between $t_1$ and $t_2$, the solution points towards $b$. Therefore
there exists $\lambda$ in $]0,1[$ such that
\begin{equation}
\label{eq:proof-improvement-princ} \bx(t_2)=\lambda\bx(t_1)
+(1-\lambda)\bb. 
\end{equation}
If $\bb'$ is a best response to $\bx(t_2)$ then $(\bb'-\bb) \cdot
\bU \bx(t_2) \geq 0$ so that, substituting the right-hand-side of
(\ref{eq:proof-improvement-princ}) for $\bx(t_2)$, we get:
\begin{equation}
\label{eq:proof-improvement-princ-2}(1-\lambda) (\bb'-\bb) \cdot \bU
\bb \geq \lambda(\bb-\bb') \cdot \bU \bx(t_1).
\end{equation}
Since $\bb$ is a best response to $\bx(t_1)$, the right-hand-side of
(\ref{eq:proof-improvement-princ-2}) is nonnegative, and positive if
$\bb'$ is not a best response to $\bx(t_1)$. The result follows.
\finpreuve

\noindent \textbf{Proof of proposition \ref{prop:convBR} for game (\ref{eq:RSPA}). }
Fix a solution $\bx(\cdot)$ of (\ref{eq:defBR}) such that strategy $4$ is not a best response to $\bx(0)$. Note that for any $\bx$ in $S_4$, $BR(\bx) \neq conv\left(\{\be_1,\be_2,\be_3\}\right)$, because $\be_4$ strictly dominates $(1/3,1/3,1/3,0)$. Thus, either there is a unique best response to $\bx(0)$ or, counting $i$ modulo $3$, $BR(\bx(0))=conv\left(\{\be_i,\be_{i+1}\}\right)$ for some $i$ in $\{1,2,3\}$. Assume for concreteness that strategy $1$ is the unique best response to $\bx(0)$. The solution then initially points towards $\be_1$, until some other pure strategy becomes a best response. Due to the improvement principle (lemma \ref{lm:BR-improvement-princ}), this strategy can
only be strategy $2$. Thus, the solution must then point towards the
edge $[\be_1, \be_2]$. Since strategy $2$ strictly dominates
strategy $1$ in the game restricted to $\{1,2\} \times \{1,2\}$, 
strategy $2$ immediately becomes the unique best response. Iterating this argument, we see that the solution will point
towards $\be_2$, till $3$ becomes a best response, then towards $\be_3$, till $1$
becomes a best response again, and so on.

To show that this behaviour continues for ever, it suffices to show
that the times at which the direction of the trajectory changes do
not accumulate. This is the object of the following claim, which
will be proved in the end:
\begin{claim}
\label{cl:Brproof2} The time length between two successive times
when the direction of $\bx(t)$ changes is bounded away from zero.
\end{claim}
Now recall (\ref{eq:BR-def-V-and-W}), and note that for game (\ref{eq:RSPA}) the terms $u_{ii}$ are zero, so that $V(\bx)=\max_{1 \leq i \leq 3}(\bU\bx)_i$. Let $v(t):=V(\bx(t))$,
$w(t):=W(\bx(t))$. When $\bx(t)$ points towards $\be_i$ (with $i$ in $\{1,2,3\}$), we have $\dot{x}_4=-x_4$ and %
\begin{equation}
\label{44B-eq:LyapBR} \dot{v}=(\bU\dot{\bx})_i=(\bU(\be_i-\bx))_i = -v.
\end{equation}
Therefore $\dot{w}=-w$. Since for
almost all time $t$, $\bx(t)$ points towards $\be_1$, $\be_2$ or
$\be_3$, it follows that $w(t)$ decreases exponentially to $0$. Therefore
$\bx(t)$ converges to the Shapley triangle.

To complete the proof, we still need to prove claim \ref{cl:Brproof2}:\\

\noindent \textbf{Proof of claim \ref{cl:Brproof2}}: In what follows
$i \in \{1,2,3\}$ and $i+1$ is counted modulo $3$. Fix an initial
condition and let
$$g(t):=\max_{1 \leq i,j \leq 3}
\left[(\bU\bx(t))_i-(\bU\bx(t))_j\right]$$
denote the maximum difference between the payoffs of strategies in
$\{1,2,3\}.$ Let $t_i^k$ denote the $k^{th}$ time at which strategy $i$ becomes a
best response and choose $i$ such that $t_i^k<t_{i+1}^k$. Simple computations, detailed in (Viossat, 2006, p.11-12), show that:
\begin{equation}
\label{eq:notaccumulate} 
g(t_i^{k+1})=\frac{1}{\varepsilon^3 +
g(t_{i}^k)(1 + \varepsilon + \varepsilon^2)}
g(t_i^k).
\end{equation}
Since $\varepsilon<1$, 
it follows that for small $g(t_i^k)$, we
have $g(t^{k+1}_i)>g(t_i^k)$; therefore $g(t_i^k)$ is bounded away
from zero. Now, since $(\bU\bx(t))_i-(\bU\bx(t))_{i+1}$ decreases
from $g(t_i^k)$ to $0$ between $t_i^k$ and $t_{i+1}^k$, and since
the speed at which this quantity varies is bounded, it follows that
$t_{i+1}^k-t_i^k$ is bounded away from zero too. That is, the time
length between two successive times at which the direction of
$\bx(t)$ changes is bounded away from zero. \finpreuve

\noindent\textbf{Proof of proposition \ref{prop:convBR} for games close to (\ref{eq:RSPA}).}
Counting $i$ modulo $3$, let $\alpha_i=u_{ii}-u_{i-1,i}$ and $\beta_i=u_{i+1,i}-u_{ii}$, $i=1,2,3$. Let $i \in \{1,2,3\}$. For every game sufficiently close to $(\ref{eq:RSPA})$, $\alpha_i$ and $\beta_i$ are positive, $\alpha_1\alpha_2\alpha_3>\beta_1\beta_2\beta_3$, $u_{4i} < u_{ii}$, and strategy $4$ strictly dominates $(1/3,1/3,1/3,0)$. Furthermore, for every game satisfying these conditions, the proof of proposition \ref{prop:convBR} for game (\ref{eq:RSPA}) goes through. The only differences are that equation (\ref{44B-eq:LyapBR}) becomes
$$\dot{v}=(\bU\dot{\bx})_i- \sum_{1 \leq j \leq
4} u_{jj} \dot{x}_j=(\bU(\be_i-\bx))_i - \left(u_{ii} - \sum_{1 \leq j
\leq 4} u_{jj} x_j\right) = -v$$
and equation (\ref{eq:notaccumulate}) becomes
$$
g(t_i^{k+1})=\frac{\alpha_1\alpha_2\alpha_3}{\beta_1\beta_2\beta_3 +
g(t_{i}^k)(\alpha_1\alpha_2 +\alpha_1\beta_3+ \beta_2\beta_3)}
g(t_i^k).$$
See (Viossat, 2006) for details. This completes the proof. \end{proof}

Note that for every $\eta>0$, we may set the
parameters of (\ref{eq:RSPA}) so that the set $\{\bx \in S_4: \be_4 \in BR(\bx) \}$ has Lebesgue measure less than $\eta$. In this sense,
the basin of attraction of the Shapley triangle can be made
arbitrarily large. Similarly, for the replicator dynamics, the basin of attraction of the heteroclinic cycle (\ref{44a-eq:def_Gamma}) can be made arbitrarily large (Viossat, 2007a). For additional results on the best-response dynamics and of the replicator dynamics in $4 \times 4$ games based on a RPS game, see (Viossat, 2006).
\section{Brown-von Neumann-Nash dynamics}
\label{sec:BNN}%
The Brown-von Neumann-Nash dynamics (henceforth BNN) is given by:
\begin{equation}
\label{def:BNN}
\dot{x}_i=k_i(\bx)-x_i
\sum_{j \in I} k_j(\bx)
\end{equation}
where
\begin{equation}
\label{eq:BNN-def-ki} k_i(\bx):=\max (0,(\bU\bx)_i-\bx \cdot\bU\bx)
\end{equation}
is the excess payoff of strategy $i$ over the average payoff. 
We refer to (\href{http://homepage.univie.ac.at/Josef.Hofbauer/00sel.pdf}{Hofbauer, 2000}; Berger and Hofbauer, 2006) and
references therein for a motivation of and results on BNN.

Let $G_0$ denote the game (\ref{eq:RSPA}) with $\alpha=0$. Recall
that $\bU_0$ denotes its payoff matrix and $\bn=\left(\frac{1}{3},\frac{1}{3},\frac{1}{3},0\right)$ the mixed strategy
corresponding to the Nash equilibrium of the underlying RPS game.
It may be shown that the set of symmetric Nash equilibria of $G_0$ is the
segment $E_0=[\bn,\be_4]$.\footnote{The game $G_0$ has other, asymmetric
equilibria, but they will play no role.} This section is devoted to
a proof of the following proposition:
\begin{proposition}
\label{prop:BNN}
If $C$ is a closed subset of $S_4$ disjoint from $E_0$, then there
exists a neighborhood of $G_0$ such that, for every game in this
neighborhood and every initial condition in $C$, $x_4(t) \to 0$ as
$t \to +\infty$.
\end{proposition}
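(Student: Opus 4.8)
The plan is to show that strategy $4$ is eliminated for every initial condition in $C$ by constructing a Lyapunov-type ``tube'' argument around the segment $E_0=[\bn,\be_4]$, exactly as foreshadowed in the discussion after Proposition~\ref{prop:uniquecor}. The idea is that $E_0$ is the only obstacle to showing $x_4\to 0$: away from $E_0$ strategy $4$ should lose mass, so I must (a) rule out trajectories being attracted to $E_0$ and (b) show $x_4$ decreases everywhere else. Since $C$ is a closed subset of the compact simplex $S_4$ disjoint from $E_0$, it is compact and lies at positive distance from $E_0$; this uniformity is what will let me pass from the single game $G_0$ to a whole neighborhood.

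First I would analyze the BNN dynamics on $G_0$ itself. The key quantity is $k_4(\bx)=\max(0,(\bU_0\bx)_4-\bx\cdot\bU_0\bx)$. I would compute the payoff of strategy $4$ against $\bx$ and compare it with the average. On the segment $E_0$ every point is a symmetric Nash equilibrium, so all excess payoffs $k_i$ vanish there and the segment is a set of rest points. The crucial structural fact I expect to establish is that $E_0$ is \emph{repelling} in the directions transverse to it within the $x_4$-level sets: the underlying Rock-Paper-Scissors component pushes solutions to cycle away from $\bn=(1/3,1/3,1/3,0)$, and correspondingly the interior Nash equilibrium of the RPS game is unstable under BNN. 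I would make this precise by exhibiting a neighborhood (the ``tube'') $T$ of $E_0$ and showing that solutions entering $T$ transversally are eventually expelled, so that no trajectory starting in $C$ can converge to $E_0$.

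The heart of the argument is then the following dichotomy. I would show that there is a thin tube $T$ around $E_0$ such that: (i) outside $T$, we have $(\bU_0\bx)_4 < \bx\cdot\bU_0\bx$, equivalently strategy $4$ earns strictly below average, whence $k_4(\bx)=0$ and $\dot{x}_4 = -x_4\sum_j k_j(\bx) \le 0$, with the sum bounded away from $0$ by compactness, forcing $x_4$ to decay exponentially while the trajectory stays outside $T$; and (ii) the tube $T$ repels solutions, so a trajectory cannot remain in $T$ forever nor accumulate on $E_0$. Combining (i) and (ii): any solution spends almost all its time outside $T$, and on the complement of $T$ the quantity $\sum_j k_j(\bx)$ is uniformly positive, so $x_4(t)\to 0$. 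Passing from $G_0$ to nearby games is then a matter of continuity: the strict inequality in (i) and the repelling estimate in (ii) are strict and hold on the compact set $C$ (and on a compact neighborhood of $E_0$'s complement), hence they persist under sufficiently small perturbations of the payoff matrix, which is precisely the uniform neighborhood claimed.

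\textbf{The main obstacle} I anticipate is step (ii), controlling the behavior near $E_0$. Unlike the monotonic case, here the attractor of the underlying RPS dynamics is not explicitly known, so I cannot simply invoke asymptotic stability of a heteroclinic cycle. Instead I must construct an explicit repelling estimate near the whole segment $E_0$ (not just near a single equilibrium), showing that a suitable function measuring transverse distance to $E_0$ strictly increases along solutions inside $T$. The delicate point is that $E_0$ is a segment of equilibria rather than an isolated point, so the repulsion must be shown to hold uniformly along $E_0$ and to survive the coupling between the RPS block and the fourth coordinate; handling the degeneracy along the segment direction, where the dynamics is neutral, while extracting genuine instability in the transverse directions, is where the real work lies.
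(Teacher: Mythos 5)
Your overall plan coincides with the paper's: a tube around $E_0$ that solutions cannot penetrate, combined with the fact that outside the tube strategy $4$ earns strictly below average so that $k_4=0$ and $\dot{x}_4=-x_4\sum_j k_j<0$ uniformly, and a final perturbation argument using strictness and compactness to cover a whole neighborhood of $G_0$. But the proposal has a genuine gap exactly where you flag ``the real work'': you never construct the transverse-distance function nor prove the repelling estimate, you only assert that one must exist. This is the heart of the matter, and the paper resolves it with a specific choice: $V_0(\bx):=\frac{1}{2}\sum_{i}\left[\max\left(0,(\bU_0\bx)_i-\bx\cdot\bU_0\bx\right)\right]^2$, which vanishes precisely on $E_0$, together with Hofbauer's identity $\dot{v}_0=\bar{k}^2\left[(\bq-\bx)\cdot\bU_0(\bq-\bx)-(\bq-\bx)\cdot\bU_0\bx\right]$ where $\bar{k}=\sum_i k_i$ and $q_i=k_i/\bar{k}$. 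The second term tends to $0$ near $E_0$ because against any point of $E_0$ all strategies earn the same payoff, while the first term is bounded below by $\frac{1-\varepsilon}{18}$ via the quadratic-form identity $(\bx-\bp)\cdot\bU_0(\bx-\bp)=\frac{1-\varepsilon}{2}\sum_{1\leq i\leq 3}\left(x_i-\frac{1-x_4}{3}\right)^2$ for $\bp\in E_0$, combined with the observations that $q_4=0$ and $q_i=0$ for some $i\in\{1,2,3\}$ whenever $\bx\notin E_0$. Note that the degeneracy along the segment direction, which you identify as delicate, is handled automatically: $V_0$ vanishes on all of $E_0$ and the lower bound on the first term is uniform in $\bp\in E_0$, so no separate treatment of the neutral direction is needed. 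Without some such computation your step (ii) is an unproven claim, and it is not obtainable from soft arguments (instability of the RPS equilibrium alone does not give uniform repulsion along a whole segment of rest points).

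There is a second, lesser flaw in how you glue (i) and (ii). You argue that solutions ``entering $T$ transversally are eventually expelled'' and hence ``spend almost all their time outside $T$,'' concluding $x_4\to 0$. This is weaker than what is needed and weaker than what the Lyapunov estimate actually gives: near $E_0$ the total excess payoff $\bar{k}$ is arbitrarily small, so a solution making repeated long excursions into the tube could stall the decay of $x_4$, and ``repelled eventually'' provides no quantitative control on the fraction of time spent inside. The paper avoids this entirely: choosing $\delta>0$ below $\min_{S_4\setminus N_{eq}}V_0$, the set $C_\delta=\{\bx\in S_4: V_0(\bx)\geq\delta\}$ is \emph{forward invariant} (since $\dot{v}_0>0$ on the level set $v_0=\delta$), so a trajectory starting in $C\subset C_\delta$ never enters the tube at all, and $x_4$ decays at a uniform exponential rate from time zero. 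Your final continuity step is sound in spirit, but it too relies on the unproven estimate: the paper perturbs while keeping $V_0$ defined through $\bU_0$ and uses Lipschitz dependence of $\dot{\bx}$ on the payoff matrix together with the uniform bounds $\dot{v}_0\geq\gamma$ on $\{V_0=\delta\}$ and $(\bU_0\bx)_4-\bx\cdot\bU_0\bx\leq-\gamma'$ on $C_\delta$, which is exactly the quantitative content your sketch is missing.
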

Any neighborhood of $G_0$ contains a neighborhood of a game of kind (\ref{eq:RSPA}), hence an open set of games for which $\be_4 \otimes \be_4$ is the unique correlated
equilibrium. Together with proposition
\ref{prop:BNN}, this implies that there exists an open set of games
for which, under BNN, the unique strategy played in correlated
equilibrium is eliminated from an open set of initial conditions.

The essence of the proof of proposition \ref{prop:BNN} is to show
that, for games close to $G_0$, there is a ``tube" surrounding $E_0$
such that: (i) the tube repels solutions coming from outside; (ii)
outside of the tube, strategy 4 earns less than average, hence $x_4$
decreases. We first show that in $G_0$ the segment $E_0$ is locally
repelling.

The function
$$V_0(\bx):=\frac{1}{2}\sum_{i\in I}k_i^2=\frac{1}{2}\sum_{i\in
I}\left[\max\left( 0,(\bU_0\bx)_i-\bx \cdot \bU_0
\bx\right)\right]^2$$
is continuous, nonnegative and equals $0$ exactly on the symmetric
Nash equilibria, i.e. on $E_0$, so that $V_0(\bx)$ may be seen as a
distance from $\bx$ to $E_0$. Fix an initial condition and let
$v_0(t):=V_0(\bx(t))$.
\begin{lemma}
\label{lm:BNN}
There exists an open neighborhood $N_{eq}$ of $E_0$
such that, under BNN in the game $G_0$, $\dot{v}_0(t)>0$ whenever
$\bx(t) \in N_{eq} \backslash E_0$.
\end{lemma}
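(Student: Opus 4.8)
The plan is to compute $\dot v_0$ explicitly along BNN in $G_0$ and to show that, near $E_0$, it is a nonnegative quadratic form in the excess payoffs that degenerates only on $E_0$ itself.

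First I would differentiate $V_0$. Writing $\phi_i(\bx):=(\bU_0\bx)_i-\bx\cdot\bU_0\bx$, so that $k_i=\max(0,\phi_i)$ and $V_0=\tfrac12\sum_i k_i^2$, one checks that $[\max(0,\phi)]^2$ has the continuous derivative $2\max(0,\phi)$; hence $V_0$ is $C^1$ and, along a solution, $\dot v_0=\sum_i k_i\dot\phi_i$, the kink at $\phi_i=0$ causing no difficulty since the corresponding terms vanish. Substituting the BNN field $\dot x_j=k_j-x_jK$ with $K:=\sum_j k_j$, and using $\tfrac{d}{dt}(\bx\cdot\bU_0\bx)=\sum_{i,j}k_iu_{ij}x_j+\sum_{i,j}x_iu_{ij}k_j-2K(\bx\cdot\bU_0\bx)$, I obtain the exact identity
$$\dot v_0=\sum_{i,j}k_iu_{ij}k_j-2K\sum_{i,j}k_iu_{ij}x_j-K\sum_{i,j}x_iu_{ij}k_j+2K^2(\bx\cdot\bU_0\bx),$$
where the $u_{ij}$ are the entries of $\bU_0$.

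Next I would analyse this near $E_0$. Every $\bx^*\in E_0$ satisfies $\bU_0\bx^*=c\,(1,1,1,1)$ with $c=\bx^*\cdot\bU_0\bx^*$, while $\sum_i x^*_i u_{ij}$ equals $\tfrac{\varepsilon-1}{3}$ for $j\le3$ and $0$ for $j=4$; all $k_i$ vanish on $E_0$, so near $E_0$ each $k_i$ is first order in $\mathrm{dist}(\bx,E_0)$. Keeping only quadratic terms and writing $s=k_1+k_2+k_3$, $q=k_1^2+k_2^2+k_3^2$, the two contributions proportional to $c$ cancel (here $-2K\sum_{i,j}k_iu_{ij}x_j\approx-2cK^2$ and $2K^2(\bx\cdot\bU_0\bx)\approx2cK^2$), the strategy-$4$ terms are of higher order because $\phi_4$ vanishes to first order on $E_0$, and the symmetric part $\tfrac{\varepsilon-1}{2}(J-I)$ of the Rock-Paper-Scissors block (with $J$ the $3\times3$ all-ones matrix and $I$ the identity) contributes $\tfrac{\varepsilon-1}{2}(s^2-q)$, which combines with $-K\sum_{i,j}x_iu_{ij}k_j\approx\tfrac{1-\varepsilon}{3}s^2$ to give
$$\dot v_0=\frac{1-\varepsilon}{6}\left(3q-s^2\right)+o\!\left(\mathrm{dist}(\bx,E_0)^2\right)=\frac{1-\varepsilon}{6}\sum_{1\le i<j\le3}(k_i-k_j)^2+o\!\left(\mathrm{dist}(\bx,E_0)^2\right).$$
Since $\varepsilon<1$, the leading term is $\ge0$ and vanishes exactly when $k_1=k_2=k_3$.

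It remains to prove that, near $E_0$, the equality $k_1=k_2=k_3$ forces $\bx\in E_0$, and to make the bound uniform. If the common value is positive then $(\bU_0\bx)_1=(\bU_0\bx)_2=(\bU_0\bx)_3$; the cyclic structure of the RPS block forces $x_1=x_2=x_3$, so $\bx$ lies on the line carrying $E_0$, hence on $E_0$, where all $k_i=0$ --- a contradiction. If the common value is $0$ then $\phi_1,\phi_2,\phi_3\le0$; for interior $\bx^*$ the identity $\sum_i x_i\phi_i=0$ together with $\phi_4=o(\mathrm{dist})$ forces the leading parts of $\phi_1,\phi_2,\phi_3$ to vanish, i.e. the transverse component of $\bx-\bx^*$ is zero. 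Since the linear map from the directions transverse to $E_0$ to $(\phi_1,\phi_2,\phi_3)$ is injective (again by the RPS block), the leading form is bounded below by $\kappa\,\mathrm{dist}(\bx,E_0)^2$ for some $\kappa>0$, and a compactness argument along $E_0$ makes $\kappa$ uniform; this dominates the remainder on a neighborhood $N_{eq}$, giving $\dot v_0>0$ on $N_{eq}\setminus E_0$. The main obstacle is exactly this last uniform positivity: the argument via $\sum_i x_i\phi_i=0$ breaks down at the endpoint $\be_4$, where the support of $\bx^*$ collapses, so that point needs a separate direct sign check of $\phi_1,\phi_2,\phi_3$ (for instance at $(\delta,0,0,1-\delta)$ one finds $k_1,k_2>0$ and $k_3=0$, so the leading form is strictly positive), and one must also verify that the piecewise-linear structure induced by the various $\max(0,\phi_i)$ does not spoil the uniform quadratic lower bound.
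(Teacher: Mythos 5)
Your computation is correct as far as it goes, and it is in fact the paper's own machinery in disguise: the exact identity you derive for $\dot v_0$ is precisely Hofbauer's (2000) formula $\dot{v}_0=\bar{k}^2\left[(\bq-\bx)\cdot \bU_0(\bq-\bx) - (\bq-\bx) \cdot \bU_0\bx\right]$ (with $\bar k=\sum_i k_i$, $q_i=k_i/\bar k$) multiplied out, and your leading form $\frac{1-\varepsilon}{6}(3q-s^2)$ coincides with the paper's $\frac{1-\varepsilon}{2}\,\bar k^2\sum_{1\le i\le 3}(q_i-\tfrac13)^2$. But the step you yourself flag as ``the main obstacle'' is a genuine gap, not a routine verification: to conclude you would need a uniform bound of the type $\sum_{i<j}(k_i-k_j)^2\ge\kappa\,\mathrm{dist}(\bx,E_0)^2$, which in turn requires something like $\bar k\ge c\,\mathrm{dist}(\bx,E_0)$ near $E_0$ --- never established --- and your argument via $\sum_i x_i\phi_i=0$ plus injectivity of a linearization degenerates at the endpoint $\be_4$, where a spot check at $(\delta,0,0,1-\delta)$ gives a sign at one family of points but no uniform constant. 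The case analysis on the common value of the $k_i$ is also partly beside the point: if the common value is $0$ then all $k_i$ vanish and $\bx$ is a symmetric equilibrium, i.e.\ $\bx\in E_0$ by definition of $E_0$; the real issue is quantitative uniformity, which your proposal leaves open.

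The paper closes exactly this gap with one observation your proposal is missing, and it slots directly into your identity. From $\bn\cdot\bU_0\bx=\be_4\cdot\bU_0\bx$ and $(\bx-\bp)\cdot\bU_0\bx=\frac{1-\varepsilon}{2}\sum_{1\le i\le 3}\bigl(x_i-\frac{1-x_4}{3}\bigr)^2>0$ for all $\bp\in E_0$, $\bx\notin E_0$, one gets, exactly and not just to leading order,
\[
\min_{1 \leq i \leq 3}(\bU_0\bx)_i \leq \bn\cdot\bU_0\bx=(\bU_0\bx)_4 < \bx \cdot \bU_0 \bx ,
\]
so $k_4=0$ (indeed $\phi_4\le 0$ identically --- stronger than your ``higher order'' claim) \emph{and} $k_i=0$ for some $i\in\{1,2,3\}$. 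Hence $q_4=0$ and $\min_{i\le 3}q_i=0$, so the normalized excess-payoff direction $\bq$ is uniformly bounded away from $E_0$ and $(\bq-\bp)\cdot\bU_0(\bq-\bp)=\frac{1-\varepsilon}{2}\sum_{i\le3}(q_i-\tfrac13)^2\ge\frac{1-\varepsilon}{18}$ for every $\bp\in E_0$ --- a \emph{constant} lower bound, not a distance-squared one. After factoring out $\bar k^2$ (which is positive off $E_0$), the bracket in Hofbauer's identity is at least $\frac{1-\varepsilon}{18}-O(\mathrm{dist}(\bx,E_0))$, because $(\bq-\bx)\cdot\bU_0\bx=(\bq-\bx)\cdot\bU_0(\bx-\bp)$ vanishes linearly as $\bx\to E_0$ (against any $\bp\in E_0$ all strategies earn the same payoff). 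This yields $\dot v_0>0$ on a uniform neighborhood minus $E_0$ with no lower bound on $\bar k$ in terms of the distance, no separate treatment of $\be_4$, and no concern about the kinks of $\max(0,\cdot)$; you should replace your closing paragraph by this normalization argument.
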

\begin{proof}
It is easily checked that:
\begin{equation}
\label{eq:earn-same-bis} \bn \cdot \bU_0 \bx = \be_4 \cdot \bU_0 \bx
\hspace{0.5 cm} \forall \bx \in S_4
\end{equation}
(that is, $\bn$ and $\be_4$ always earn the same payoff) and%
\begin{equation}
\label{eq:yield-same-bis} (\bx-\bx') \cdot \bU_0 \be_4=(\bx-\bx')
\cdot \bU_0 \bn=0 \hspace{0.5 cm} \forall \bx \in S_4, \forall \bx'
\in S_4
\end{equation}
(that is, against $\be_4$ [resp. $\bn$], all strategies earn the same payoff). Furthermore, as follows from
lemma 4.1 in
(Viossat, 2007a),
for every $\bp$ in $E_0$ and every $\bx \notin E_0$,
\begin{equation}
\label{eq:RPSA-globinf-bis}%
(\bx-\bp) \cdot \bU_0 \bx = (\bx - \bp) \cdot \bU_0
(\bx-\bp)=\frac{1-\varepsilon}{2} \sum_{1 \leq i \leq 3}\left(x_i
-\frac{1-x_4}{3}\right)^2>0.
\end{equation}
\href{http://homepage.univie.ac.at/Josef.Hofbauer/00sel.pdf}{Hofbauer (2000)} shows that the function $v_0$ satisfies %
\begin{equation}
\label{eq:BNN-dotV}
\dot{v}_0=\bar{k}^2\left[(\bq-\bx)\cdot \bU_0(\bq-\bx) - (\bq-\bx) \cdot
\bU_0\bx\right]
\end{equation}
with $\bx=\bx(t)$, $\bar{k}=\sum_i k_i$ and $q_i=k_i/\bar{k}$. It
follows from equation (\ref{eq:yield-same-bis}) that if $\bp \in
E_0$, then against $\bp$ all strategies earn the same payoff.
Therefore the second term $(\bq-\bx) \cdot \bU_0\bx$ goes to $0$ as
$\bx$ approaches $E_0$. Thus, to prove lemma \ref{lm:BNN}, it
suffices to show that as $\bx$ approaches $E_0$, the first term
$(\bq-\bx) \cdot \bU_0(\bq-\bx)$ is positive and bounded away from
$0$. But for $\bx \notin E_0$,
\begin{equation}
\label{eq:BNN-comparepayoffs}
\min_{1 \leq i \leq 3}(\bU_0\bx)_i \leq \bn\cdot
\bU_0\bx=(\bU_0\bx)_4 < \bx \cdot \bU_0 \bx
\end{equation}
(the first inequality holds because $\bn$ is a convex combination of
$\be_1$, $\be_2$ and $\be_3$, the equality follows from
(\ref{eq:earn-same-bis}) and the strict inequality from
(\ref{eq:RPSA-globinf-bis}) applied to $\bp=\be_4$). It follows from
$(\bU_0\bx)_4 < \bx \cdot \bU_0 \bx$ that $k_4=0$ hence $q_4=0$;
similarly, it follows from $\min_{1 \leq i \leq 3}(\bU_0\bx)_i < \bx
\cdot \bU_0 \bx$ that $q_i=0$ for some $i$ in $\{1,2,3\}$. Together
with (\ref{eq:RPSA-globinf-bis}) applied to $\bx=\bq$, this implies that for every $\bp$
in $E_0$,
$$(\bq-\bp) \cdot \bU_0(\bq-\bp)=\frac{1-\varepsilon}{2}\sum_{1 \leq i \leq
3}\left(q_i-\frac{1}{3}\right)^2 \geq \frac{1-\varepsilon}{18}.$$
This completes the proof.%
\end{proof}

{\bf Proof of proposition \ref{prop:BNN}}. Consider first the BNN
dynamics in the game $G_0$. Recall lemma \ref{lm:BNN} and let
\begin{equation} \label{eq:BNN-def-delta}
0<\delta<\min_{\bx \in S_4\backslash N_{eq}} V_0(\bx)
\end{equation}
(the latter is positive because $V_0$ is positive on $S_4 \backslash
E_0$, hence on $S_4\backslash N_{eq}$, and because $S_4\backslash
N_{eq}$ is compact). Note that if $V_0(\bx) \leq \delta$ then $\bx
\in N_{eq}$. Therefore it follows from lemma \ref{lm:BNN} and
$\delta>0$ that
\begin{equation}
\label{eq:BNN-repell-new}%
v_0(t)=\delta \Rightarrow \dot{v}_0(t) >0.
\end{equation}
Let
$$C_{\delta}:=\{\bx \in S_4 : V_0(\bx) \geq \delta\}.$$
Since $\delta>0$, the sets $C_{\delta}$ and $E_0$ are disjoint.
Therefore, by (\ref{eq:RPSA-globinf-bis}) applied to $\bp=\be_4$,
\begin{equation}
\label{eq:BNN-4decreases-new}
\bx \in C_{\delta} \Rightarrow (\bU_0\bx)_4-\bx \cdot \bU_0\bx < 0
\end{equation}
so that $x_4$ decreases strictly as long as $\bx \in C_{\delta}$ and
$x_4>0$. Since, by (\ref{eq:BNN-repell-new}), the set $C_{\delta}$
is forward invariant, it follows that for any initial condition in
$C_{\delta}$, strategy 4 is eliminated.

Now let $\nabla V_0 (\bx)=\left(\partial V_0/\partial x_i\right)_{1
\leq i \leq n}(\bx)$ denote the gradient of $V_0$ at $\bx$. It is
easy to see that  $V_0$ is $C^1$. Therefore it follows from
(\ref{eq:BNN-repell-new}), $\dot{v}_0(t)=\nabla V_0(\bx(t)) \cdot
\dot{\bx}(t)$ and compactness of $\{\bx \in S_4: V_0(\bx)=\delta\}$
that
\begin{equation}
\label{eq:BNN-repell-old}
\exists \gamma>0, \left[v_0(t)=\delta \Rightarrow \dot{v}_0(t)\geq
\gamma>0\right].
\end{equation}
Similarly, since $C_{\delta}$ is compact, it follows from
(\ref{eq:BNN-4decreases-new}) that there exists $\gamma'>0$ such
that
\begin{equation}
\label{eq:BNN-4decreases-old}
\bx \in C_{\delta} \Rightarrow (\bU_0\bx)_4-\bx \cdot \bU_0\bx \leq -\gamma' < 0.%
\end{equation}
Since $\dot{\bx}$ is Lipschitz in the payoff matrix, it follows from
(\ref{eq:BNN-repell-old}) that for $\bU$ close enough to $\bU_0$, we
still have $v_0(t)=\delta \Rightarrow \dot{v_0}>0$ under the
perturbed dynamics. Similarly, due to (\ref{eq:BNN-4decreases-old}),
we still have $\bx \in C_{\delta} \Rightarrow (\bU\bx)_4-\bx \cdot
\bU\bx < 0$. Therefore the above reasoning applies and for every
initial condition in $C_{\delta}$, strategy 4 is eliminated.

Note that $\delta$ can be chosen arbitrarily small (see
(\ref{eq:BNN-def-delta})). Therefore, to complete the proof of
proposition \ref{prop:BNN}, it suffices to show that if $C$ is a
compact set disjoint from $E_0$ then, for $\delta$ sufficiently
small, $C \subset C_{\delta}$. But since $V_0$ is positive on $S_4
\backslash E_0$, and since $C$ is compact and disjoint from $E_0$,
it follows that there exists $\delta'>0$ such that, for all $\bx$ in $C$, $V_0(\bx)
\geq \delta'$; hence, for all $\delta \leq \delta'$, $C \subset
C_{\delta}$. This completes the proof. \finpreuve

\href{http://homepage.univie.ac.at/Josef.Hofbauer/00sel.pdf}{Hofbauer (2000, section 6)} considers the
following generalization of the BNN dynamics:
\begin{equation}
\label{eq:generalBNN} \dot{x}_i= f(k_i)-x_i \sum_{j=1}^{n} f(k_j)
\end{equation}
where $f: \R_+ \to \R_+$ is a continuous function with $f(0)=0$ and
$f(u)>0$ for $u>0$, and where $k_i$ is defined as in
(\ref{eq:BNN-def-ki}). The results of this section generalize to any
such dynamics:
\begin{proposition} \label{prop:generalBNN} Consider a dynamics of type (\ref{eq:generalBNN}). If $C$ is a
closed subset of $S_4$ disjoint from $E_0$, then there exists a
neighborhood of $G_0$ such that, for every game in this neighborhood
and every initial condition in $C$, $x_4(t) \to 0$ as $t \to
+\infty$.
\end{proposition}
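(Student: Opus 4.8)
The plan is to mirror the proof of Proposition~\ref{prop:BNN}, replacing the Lyapunov function $V_0=\frac12\sum_i k_i^2$ by its natural generalization. Put $F(u):=\int_0^u f(s)\,ds$ and define $V_0(\bx):=\sum_{i\in I}F(k_i(\bx))$, with $k_i$ as in (\ref{eq:BNN-def-ki}). Since $f$ is continuous and $f(0)=0$, the primitive $F$ is $C^1$ with $F'=f$, and each map $\bx\mapsto F(k_i(\bx))$ is $C^1$: the kink of $k_i=\max(0,(\bU_0\bx)_i-\bx\cdot\bU_0\bx)$ is smoothed because $F'(0)=f(0)=0$. As $F\geq 0$ and $F(u)>0$ for $u>0$, the function $V_0$ is nonnegative and vanishes exactly where all $k_i=0$, i.e.\ exactly on $E_0$, so it again serves as a distance to $E_0$. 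Finally, writing $\kappa:=\sum_j f(k_j)$ and $q_i:=f(k_i)/\kappa$ (where $\kappa>0$), the dynamics (\ref{eq:generalBNN}) takes the form $\dot{\bx}=\kappa(\bq-\bx)$ used throughout the BNN analysis.

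First I would prove the analogue of Lemma~\ref{lm:BNN}: a neighborhood $N_{eq}$ of $E_0$ on which $\dot v_0>0$ off $E_0$. A direct computation (or Hofbauer, 2000, section~6) yields the exact analogue of (\ref{eq:BNN-dotV}),
\[
\dot v_0=\kappa^2\left[(\bq-\bx)\cdot\bU_0(\bq-\bx)-(\bq-\bx)\cdot\bU_0\bx\right],
\]
derived line for line as in the BNN case from $\dot{\bx}=\kappa(\bq-\bx)$, from the fact that terms with $k_i=0$ drop out of $\dot v_0=\sum_i f(k_i)\dot k_i$, and from the bilinear algebra of $\bU_0$. The argument of Lemma~\ref{lm:BNN} then transfers unchanged: by (\ref{eq:BNN-comparepayoffs}), for $\bx$ near but off $E_0$ strategy $4$ earns strictly below average, so $k_4=0$ and $q_4=0$, while $\min_{1\leq i\leq 3}(\bU_0\bx)_i<\bx\cdot\bU_0\bx$ forces $q_i=0$ for some $i\in\{1,2,3\}$. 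Applying (\ref{eq:RPSA-globinf-bis}) to $\bq$ bounds the first bracketed term below by $(1-\varepsilon)/18>0$, whereas (\ref{eq:yield-same-bis}) makes the second term tend to $0$ as $\bx\to E_0$; hence $\dot v_0>0$ on some $N_{eq}\setminus E_0$.

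The two remaining ingredients need no modification. Outside the tube, on $C_\delta:=\{\bx:V_0(\bx)\geq\delta\}$ for small $\delta>0$, strategy $4$ earns strictly below average by (\ref{eq:BNN-4decreases-new}), so $k_4=0$ and, using $f(0)=0$,
\[
\dot x_4=f(k_4)-x_4\sum_j f(k_j)=-x_4\,\kappa;
\]
moreover $\kappa>0$ on $C_\delta$ because $C_\delta$ is disjoint from $E_0$, the set of symmetric Nash equilibria, so some strategy earns strictly above average. Thus $x_4$ strictly decreases while $x_4>0$. Exactly as in Proposition~\ref{prop:BNN}, both the repelling of the level set $\{V_0=\delta\}$ and the strict decrease of $x_4$ on $C_\delta$ are uniform by compactness and hence persist when $\bU_0$ is replaced by any nearby $\bU$ (the right-hand side of (\ref{eq:generalBNN}) depends continuously, indeed Lipschitz, on $\bU$ for fixed $f$). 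Consequently $C_\delta$ is forward invariant and $x_4\to 0$ on it for every game near $G_0$; since any closed $C$ disjoint from $E_0$ lies in $C_\delta$ for $\delta$ small enough (because $V_0>0$ on the compact set $C$), the proposition follows.

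The only genuinely new point is the verification that $V_0$ is $C^1$ and obeys the displayed formula for $\dot v_0$ in spite of the kinks of the $k_i$; this is precisely where the hypotheses $f(0)=0$ and $f$ continuous enter, and it is the step I would write out carefully (or simply import from Hofbauer, 2000, section~6). All the rest is a transcription of the proof of Proposition~\ref{prop:BNN} with $k_i$ replaced by $f(k_i)$.
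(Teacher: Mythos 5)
Your proposal is correct and takes essentially the same route as the paper, whose proof of Proposition \ref{prop:generalBNN} consists precisely of replacing $V_0$ by $W_0(\bx)=\sum_i F(k_i(\bx))$ with $F$ an anti-derivative of $f$, importing Hofbauer's (2000) formula for $\dot{w}_0$, and rerunning the proof of Proposition \ref{prop:BNN} with $k_i$ replaced by $f(k_i)$; the details you add (that $W_0$ is $C^1$ because $F'(0)=f(0)=0$ smooths the kinks, that it vanishes exactly on $E_0$, and that $\sum_j f(k_j)>0$ off $E_0$) are exactly what the paper leaves implicit. One minor caveat: since $f$ is only assumed continuous, the vector field need not be Lipschitz in $\bU$ as you parenthetically claim, but uniform continuity on the compact simplex suffices for the perturbation step, so the argument is unaffected.
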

\begin{proof} 
Replace $V_0(\bx)$ by $W_0(\bx):=\sum_i F(k_i(\bx))$, where $F$ is an anti-derivative of $f$, and replace
$k_i$ by $f(k_i)$. Let $\bar{f}=\sum_i f(k_i)$, $\tilde{f}_i=f(k_i)/\bar{f}$, and
$\tilde{\boldf}=\left(\tilde{f}_i\right)_{1 \leq i \leq N}$. Finally, let $w_0(t)=W_0(\bx(t))$. As shown by
\href{http://homepage.univie.ac.at/Josef.Hofbauer/00sel.pdf}{Hofbauer (2000)},
$$\dot{w}_0= \bar{f}^{\,2}\left[(\tilde{\boldf}-\bx)\cdot \bU_0(\tilde{\boldf}-\bx) - (\tilde{\boldf}-\bx) \cdot
\bU_0\bx\right]$$
which is the analogue of (\ref{eq:BNN-dotV}). Then apply exactly the
same proof as for BNN.
\end{proof}

\section{Robustness to the addition of mixed strategies as new pure strategies}

\label{sec:mixed-as-new-pure} We showed that for many dynamics,
there exists an open set of symmetric $4 \times 4$ games for which,
from an open set of initial conditions, the unique strategy used in
correlated equilibrium is eliminated. Since we might not want to
rule out the possibility that individuals use mixed strategies, and
that mixed strategies be heritable, it is important to check whether
our results change if we explicitly introduce mixed strategies as
new pure strategies of the game. The paradigm is the following (Hofbauer and Sigmund, 1998, section 7.2): there is an underlying normal-form
game, called the base game, and a finite number of types of agents.
Each type plays a pure or mixed strategy of the base game. We assume
that each pure strategy of the base game is played (as a pure
strategy) by at least one type of agent, but otherwise we make no
assumptions on the agents' types. The question is whether we can
nonetheless be sure that, for an open set of initial conditions, all
strategies used in correlated equilibrium are eliminated. This
section shows that the answer is positive, at least for the
best-response dynamics and the replicator dynamics. We first
need some notations and vocabulary.

Let $G$ be a finite game with strategy set $I=\{1,...,N\}$ and
payoff matrix $\bU$. A finite game $G'$ is \emph{built on $G$ by
adding mixed strategies as new pure strategies} if:

First, letting $I'=\{1,...,N,N+1,...,N'\}$ be the set of pure
strategies of $G'$ and $\bU'$ its payoff matrix, we may associate to
each pure strategy $i$ in $I'$ a mixed strategy $\bp^i$ in $S_N$ in
such a way that:
\begin{equation} \label{eq:new-pure-are-old-mixed}
\forall i \in I',\forall j \in I', \be'_i \cdot \bU' \be'_j=\bp^i
\cdot \bU \bp^j
\end{equation}
where $\be'_i$ is the unit vector in $S_{N'}$ corresponding to the
pure strategy $i$.

Second, if $1 \leq i \leq N$, the pure strategy $i$ in the game $G'$
corresponds to the pure strategy $i$ in the base game $G$:
\begin{equation}
\label{eq:oldstrat-available} 1 \leq i \leq N \Rightarrow
\bp^i=\be_i.
\end{equation}
If $\mu'=(\mu(k,l))_{1 \leq k,l \leq N'}$ is a probability
distribution over $I' \times I'$, then it induces the probability
distribution $\mu$ on $I \times I$ given by:
$$\mu(i,j)=\sum_{1 \leq k,l \leq N'} \mu'(k,l) p^{k}_{i} p^{l}_{j}
\hspace{1 cm} \forall (i,j) \in I \times I.$$
It follows from a version of the revelation principle (see Myerson,
1994) that, if $G'$ is built on $G$ by adding mixed strategies as
new pure strategies, then for any correlated equilibrium $\mu'$ of
$G'$, the induced probability distribution on $I \times I$ is a
correlated equilibrium of $G$. Thus, if $G$ is a $4 \times 4$
symmetric game with $\be_4 \otimes \be_4$ as unique correlated
equilibrium, then $\mu'$ is a correlated equilibrium of $G'$ if and
only if, for every $k,l$ in $I'$ such that $\mu'(k,l)$ is positive,
$\bp^k=\bp^l=\be_4$. Thus, the unique strategy of $G$ used in
correlated equilibria of $G'$ is strategy $4$. We show below that:
\begin{proposition}\label{prop:mixed-as-new-pure}
For the replicator dynamics and for the best-response dynamics,
there exists an open set of $4 \times 4$ symmetric games such that,
for any game $G$ in this set:

(i) $\be_4 \otimes \be_4$ is the unique correlated equilibrium of
$G$

(ii) For any game $G'$ built on $G$ by adding mixed strategies as
new pure strategies and for an open set of initial conditions, every
pure strategy $k$ in $I'$ such that $p^k_4>0$ is
eliminated.
\end{proposition}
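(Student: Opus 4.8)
The plan is to reduce the dynamics on the extended game $G'$ to the dynamics we have already analyzed on a game close to (\ref{eq:RSPA}). The key observation is that because each new pure strategy $k$ of $G'$ is a mixed strategy $\bp^k$ of the base game $G$ (by (\ref{eq:new-pure-are-old-mixed})), a population state $\bx' \in S_{N'}$ of $G'$ projects to a mean strategy in $S_N$ via $\Pi(\bx')=\sum_{k \in I'} x'_k \bp^k$. For both dynamics I would track the evolution of the projected state and of the total mass $\sum_{k:\, p^k_4>0} x'_k$ placed on strategies that put weight on pure strategy $4$. The eliminated quantity in point (ii) is exactly the $4$-th coordinate $\Pi(\bx')_4=\sum_k x'_k p^k_4$, so proving that this projected fourth coordinate tends to $0$ suffices, provided it dominates the mass on the relevant extended strategies.

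First I would fix the open set of base games to be a neighborhood of (\ref{eq:RSPA}) for which point (i) holds by Proposition \ref{prop:uniquecor}, and for which the conclusions of Proposition \ref{prop:convBR} (best-response) and the replicator result of (Viossat, 2007a) hold. Then I would treat the two dynamics separately. For the best-response dynamics, the essential fact is that payoffs in $G'$ against any $\bx'$ depend only on the projection $\Pi(\bx')$: by (\ref{eq:new-pure-are-old-mixed}), $\be'_k \cdot \bU' \bx' = \bp^k \cdot \bU\, \Pi(\bx')$. Hence a pure strategy $k$ of $G'$ is a best response iff $\bp^k$ maximizes payoff against $\Pi(\bx')$ in the base game. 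Since $\be_4$ strictly dominates $(1/3,1/3,1/3,0)$ and is the unique symmetric strict equilibrium, for states $\bx'$ with $\Pi(\bx')$ outside the basin of $\be_4$ the best responses are concentrated on pure strategies $k$ with $p^k_4=0$; I would then show the projected trajectory $\Pi(\bx'(t))$ is an (absolutely continuous) solution of the best-response dynamics in the base game, so by Proposition \ref{prop:convBR} it converges to the Shapley triangle, where $\Pi(\bx')_4=0$, forcing $x'_k(t)\to 0$ for every $k$ with $p^k_4>0$.

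For the replicator dynamics the reduction is cleaner: the replicator equation on $G'$ projects to the replicator equation on $G$ precisely when the $\bp^k$ are the vertices and base pure strategies (this is the standard fact, Hofbauer and Sigmund 1998, section 7.2, that the mean strategy under the replicator dynamics obeys the replicator dynamics of the base game up to the variance term). Concretely, $\dot{\Pi}(\bx')_i$ satisfies the base-game replicator equation plus a nonnegative-definite correction coming from the covariance among types; since the heteroclinic cycle $\Gamma$ (where $x_4=0$) is asymptotically stable in the base game from an open set of initial conditions, I would show that this open set lifts to an open set of initial conditions in $S_{N'}$ whose projected trajectories are attracted to $\Gamma$, again yielding $\Pi(\bx'(t))_4 \to 0$.

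The main obstacle is the covariance correction term in the replicator projection: unlike the best-response case, the projected replicator dynamics on $G$ is not exactly the replicator equation but carries an extra term, so I cannot simply invoke the base-game result verbatim. The delicate point is to verify that asymptotic stability of $\Gamma$ survives this perturbation — i.e.\ that the extra term does not push the projected state away from $\Gamma$ along the relevant directions. I would handle this either by re-deriving the Lyapunov / heteroclinic-cycle stability argument directly in the extended state space, or by appealing to the robustness of asymptotic stability of $\Gamma$ under the class of dynamics covered in Section \ref{sec:posmon}; the best-response case, by contrast, should go through essentially by the projection argument alone, since there payoffs and hence best responses depend on $\bx'$ only through $\Pi(\bx')$.
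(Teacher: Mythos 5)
Your best-response half is correct and is essentially the paper's own argument: the projection $\bx=\sum_k x'_k\bp^k$ of a solution in $G'$ is itself a solution of the best-response dynamics in $G$ (the paper's proposition \ref{prop:mixed-BR-induce}), proposition \ref{prop:convBR} then gives $x_4(t)\to 0$ from the open set of initial conditions whose projection does not have strategy $4$ as a best response, and $x_4=\sum_k x'_k p^k_4\ge p^k_4 x'_k$ eliminates every type with $p^k_4>0$. (Your aside about best responses being "concentrated on strategies with $p^k_4=0$ outside the basin of $\be_4$" is not needed; the projection property plus proposition \ref{prop:convBR} is all that is used.)

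The replicator half has a genuine gap, which you half-see but do not close. The projected velocity is $\dot{\bx}=C(\bx')\,\bU\bx$, where $C(\bx')$ is the covariance matrix of the mixed strategies $\bp^k$ under the type distribution $\bx'$; two states with the same mean $\bx$ but different covariance move differently, so the projection is not an autonomous dynamics on $S_4$ at all. Hence your second fallback fails outright: section \ref{sec:posmon} covers autonomous dynamics of the form (\ref{eq:gendyn}) with $g_i$ a function of the state in $S_4$, so its robustness cannot be invoked for a non-autonomous projected flow, and the covariance correction is not small --- when mixed types carry a nonvanishing share it is of order one. Your first fallback ("re-derive the stability argument in the extended space") is indeed the paper's route, but as stated it is only a restatement of the problem. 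What the paper actually does, and what your proposal lacks, is twofold. First, it abandons the heteroclinic-cycle machinery in favor of the explicit Lyapunov function $V'(\bx')=3(x'_1x'_2x'_3)^{1/3}/(x'_1+x'_2+x'_3)$, which depends only on the coordinates of the base strategies, and shows by Lipschitz perturbation off the face $\{\sum_{i\le 4}x'_i=1\}$ that, as long as $\max_{k\ge 5}x'_k\le\eta$ and $V'(\bx')\le\delta$, strategy $4$ earns at least $\gamma'$ below average and $v'=\delta$ forces $\dot{v}'\le-\gamma'$. Second --- the key missing idea --- the multiplicative structure of the replicator gives $\dot{x}'_k/x'_k=\sum_{i\le 4}p^k_i\,\dot{x}'_i/x'_i$, hence the integrated identity $x'_k(T)=x'_k(0)\prod_{i\le 4}\bigl(x'_i(T)/x'_i(0)\bigr)^{p^k_i}\le\eta\bigl(x'_4(T)/x'_4(0)\bigr)^{p^k_4}$ for initial conditions with $\min_{i\le 3}x'_i(0)>C/2$ and $x'_k(0)<C\eta/2$: as long as $x'_4$ has not increased, every added type stays below $\eta$ (lemma \ref{lm:leq-eta}). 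A first-failure-time bootstrap (lemma \ref{lm:leq-eta-delta}) then shows both bounds hold for all $t$, whence $x'_4$ decays exponentially and $x'_k(t)\le\eta\exp(-p^k_4\gamma' t)\to 0$. Without some substitute for this control of the mass on the added mixed types, nothing in your projection argument prevents that mass from growing and distorting the projected field, so asymptotic stability of $\Gamma$ in the base game cannot simply be "lifted" as you propose.
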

(the open set of initial conditions in property (ii) is a subset of
$S_{N'}$, the simplex of mixed strategies of $G'$, and may depend on
$G'$)
\subsection{Proof for the best-response dynamics}
Let $G$ be a finite game and let $G'$ be a finite game built on $G$
by adding mixed strategies of $G$ as new pure strategies. Associate to
each mixed strategy $\bx'$ in $S_{N'}$ the induced mixed strategy
$\bx$ in $S_N$ defined by:
\begin{equation}
\label{eq:extgen-induce} \bx:=\sum_{k=1}^{N'} x_{k}' \bp^k.
\end{equation}
Let $\bx'(\cdot)$ be a solution of the best-response dynamics in
$G'$ and $\bx(\cdot)$ the induced mapping from $\R_+$ to $S_N$.
\begin{proposition}
\label{prop:mixed-BR-induce} $\bx(\cdot)$ is a solution of the
best-response dynamics in $G$.
\end{proposition}
\begin{proof} For almost all $t \geq 0$, there exists a vector $\bb'
\in BR(\bx'(t))$ such that $\dot{\bx}'(t)=\bb'-\bx'(t)$. Let
$\bb:=\sum_{k \in I'} b'_k \bp^k \in S_N$. It follows from
(\ref{eq:extgen-induce}) that:
\begin{equation}
\label{eq:x-check-BR} \dot{\bx}(t)=\sum_{k = 1}^{N'} \dot{\bx}'_k
\bp^k = \sum_{k=1}^{N'} (b'_k - x'_k)\bp^k=\bb-\bx(t).
\end{equation}
Furthermore, since $\bb'$ is a best response to $\bx'(t)$, it follows
from (\ref{eq:new-pure-are-old-mixed}) and
(\ref{eq:oldstrat-available}) that $\bb$ is a best response to
$\bx(t)$. 
Together with (\ref{eq:x-check-BR}), this implies that, for almost all $t$,
$\dot{\bx} \in BR(\bx) - \bx$. The result follows.
\end{proof}

Since
$$x_i(t) \to 0 \Rightarrow  \left(\forall k \in N', \left[ p_i^k>0 \Rightarrow x'_k(t) \to 0\right] \right)$$
proposition \ref{prop:mixed-as-new-pure} follows from propositions \ref{prop:convBR} and \ref{prop:mixed-BR-induce}.
\subsection{Proof for the replicator dynamics}
Recall that $G_0$ denotes game (\ref{eq:RSPA}) with
$\alpha=0$. Since, as already mentioned, every neighborhood of $G_0$ contains an
open set of games with $\be_4 \otimes \be_4$ as unique correlated
equilibrium, it suffices to show that every game close enough to $G_0$
satisfies property (ii) of proposition \ref{prop:mixed-as-new-pure}. 
This is done in the appendix.

The intuition is the following: first note that, for a game $G$ close to $G_0$, the set
$\Gamma$ defined in (\ref{44a-eq:def_Gamma}) is an attractor, close
to which strategy $4$ earns less than average. Now consider a game
$G'$ built on $G$ by adding mixed strategies as new pure strategies and let $\Gamma'$ denote the subset of $S_{N'}$ corresponding to $\Gamma$:
$$\Gamma'=\{\bx \in S_{N'} : x_1 + x_2 +x_3=1 \mbox{ and } x_1x_2x_3=0\}$$
For an initial condition close to $\Gamma'$: (a) as long as
the share of strategies $k \geq 4$ remains low, the solution remains
close to $\Gamma'$; 
(b) as long as the solution is close to $\Gamma'$, strategy $4$ earns less than
average and its share decreases; (c) as long as the share of
strategy $4$ does not increase, the share of strategies $k \geq 5$
remains low; moreover, if the share of strategy $x_4$ decreases, so
does, on average, the share of each added mixed strategy
in which strategy $4$ is played with positive probability.

Putting (a), (b) and (c) together gives the result.
\section{Discussion}
\label{sec:discussion} We showed that elimination of all strategies used in correlated equilibrium is a
robust phenomenon, in that it occurs for many dynamics, an open set of games and an open set of initial
conditions. Furthermore, at least for some of the leading dynamics, the results are robust to the addition of
mixed strategies as new pure strategies. Under the replicator dynamics, the best-response dynamics or the Brown-von
Neumann-Nash dynamics, the basin of attraction
of the Nash equilibrium of (\ref{eq:RSPA}) can be made arbitrarily small. In particular,  the minimal distance from the cyclic attractor 
to the basin of attraction of the Nash equilibrium can be made much larger than the minimal distance from
the Nash equilibrium to the basin of attraction of the cyclic attractor. 
The latter would thus be stochastically stable in a model \`{a} la Kandori, Mailath and Rob (1993).\footnote{The
(unperturbed) dynamics used by Kandori, Mailath and Rob (1993) is a discrete-time version of the
best-response dynamics, but it could easily be replaced by a discrete-time version of another
dynamics.} 
These results show a sharp difference between evolutionary dynamics and ``adaptive heuristics" such as no-regret dynamics (Hart
and Mas-Collel, 2003; Hart, 2005) or hypothesis testing (Young, 2004, chapter 8).

Some limitations of our results should however be stressed. First,
our results have been shown here only for single-population dynamics. They imply that for \emph{some} games and \emph{some} interior
initial conditions, two-population dynamics eliminate all strategies
used in correlated equilibrium\footnote{This is because for
symmetric two-player games with symmetric initial conditions,
two-population dynamics reduce to single-population dynamics, at
least for the replicator dynamics, the best-response dynamics and
the Brown-von-Neumann Nash dynamics.}; but maybe not for an open set
of games nor for an open set of initial conditions.

Second, the monotonic and weakly sign-preserving dynamics of section
\ref{sec:posmon} are non-innovative: strategies initially absent do
not appear. This has the effect that, even when focusing on interior
initial conditions, the growth of the share of the population
playing strategy $i$ is limited by the current value of this share.
This is appropriate if we assume that agents have to meet an agent
playing strategy $i$ to become aware of the possibility of playing
strategy $i$; but in general, as discussed by e.g. Swinkels (1993,
p.459), this seems more appropriate in biology than in economics.
While our results hold also for some important innovative dynamics,
such as the best-response dynamics and a family of dynamics
including the Brown-von Neumann-Nash dynamics, more general results
would be welcome.

Third, in the games we considered, the unique correlated equilibrium
is a strict Nash equilibrium, and is thus asymptotically stable
under most reasonable dynamics, including all those we studied.
Thus, there is still an important connection between equilibria and the outcome
of evolutionary dynamics.

For Nash equilibrium, these three limitations can be overcome, at least partially: there are wide classes of
multi-population innovative dynamics for which there exists an open set of games such that, for an open set
of initial conditions, all strategies belonging to the support of at least one Nash equilibrium are
eliminated
(\href{http://ceco.polytechnique.fr/fichiers/ceco/perso/fichiers/viossat_262_Texte_these.pdf}{Viossat,
2005}, chapter 11). Moreover, for the single-population replicator dynamics or the single-population
best-response dynamics, there are games for which, for almost all initial conditions, all strategies used in
Nash equilibrium are eliminated
(\href{http://ceco.polytechnique.fr/fichiers/ceco/perso/fichiers/viossat_262_Texte_these.pdf}{Viossat,
2005}, chapter 12). Whether these results extend
to correlated equilibrium is an open question.
\begin{appendix}
\section{Proof of proposition \ref{prop:mixed-as-new-pure}  for the replicator dynamics}
We need to show that for every game close enough to $G_0$, property (ii) of proposition \ref{prop:mixed-as-new-pure} is satisfied.
As in section \ref{sec:BNN}, let $E_0=[\bn,\be_4]$, with $\bn=(1/3,1/3,1/3,0)$. 
For $\bx$ in
$S_4\backslash\{\be_4\}$, let
\begin{equation*}
V(\bx):=3\frac{(x_1x_2x_3)^{1/3}}{x_1
+x_2 +x_3}.
\end{equation*}
The function $V$ takes its maximal value $1$ on
$E_0\backslash\{\be_4\}$ and its minimal value $0$ on the set $\{\bx
\in S_4\backslash\{\be_4\} : x_1x_2x_3 = 0\}$. Fix $\delta$ in
$]0,1[$. If $V(\bx) \leq \delta$ then $\bx \notin E_0$, hence it
follows from (\ref{eq:BNN-comparepayoffs}) that, at $\bx$, strategy
4 earns strictly less than average. Together with a compactness
argument, this implies that there exists $\gamma_1>0$ such that:
\begin{equation}
\label{eq:G0-4decrease} V(\bx) \leq \delta \Rightarrow
\left[(\bU_0\bx)_4 - \bx \cdot \bU_0 \bx \leq - \gamma_1\right].
\end{equation}
Furthermore, it is shown in
(Viossat, 2007a)
that in $G_0$, under the replicator dynamics, the function $V(\bx)$ decreases strictly along interior
trajectories (except those starting in $E_0$). More precisely, for every interior initial condition $\bx(0)
\notin E_0$ and every $t$ in $\mathbb{R}$, the function $v_0(t):=V(\bx(t))$ satisfies $\dot{v}_0(t)<0$.
Together with the compactness of $\{\bx \in S_4\backslash\{\be_4\}, V(\bx)=\delta$\}, this implies that there
exists $\gamma_2>0$ such that
\begin{equation}
\label{eq:GO-vdecrease} v_0(t)=\delta \Rightarrow \dot{v}_0(t) \leq
- \gamma_2.
\end{equation}
Fix a $4 \times 4$ matrix $\bU$ and a solution $\bx(\cdot)$ of the
replicator dynamics with payoff matrix $\bU$, with $\bx(0) \neq
\be_4$. Let $v(t):=V(\bx(t))$. Thus, the difference between $v_0$
and $v$ is that, in the
definition of $v$, $\bx(\cdot)$ is a solution of the replicator dynamics for the
payoff matrix $\bU$ and not for $\bU_0$. Since $(\bU\bx)_4 - \bx
\cdot \bU \bx$ and $\dot{\bx}$ are Lipschitz in $\bU$, it follows
from (\ref{eq:G0-4decrease}) and (\ref{eq:GO-vdecrease}) that there
exists $\gamma>0$ such that, if $||\bU-\bU_0||<\gamma$:
\begin{equation}
\label{eq:mixed-4decrease} V(\bx) \leq \delta \Rightarrow
\left[(\bU\bx)_4 - \bx \cdot \bU \bx \leq - \gamma\right]
\end{equation}
and
\begin{equation}
\label{eq:mixed-vdecrease} v(t)=\delta  \Rightarrow \dot{v}(t) \leq
- \gamma.
\end{equation}
Fix a game $G$ with payoff matrix $\bU$ such that
$||\bU-\bU_0||<\gamma$. Let $G'$ be a game built on $G$ by adding
mixed strategies of $G$ as new pure strategies, and let $\bU'$ be
its payoff matrix. 
For $\bx'$ in $S_{N'}$ such that $x'_1+x'_2+x'_3>0$, let
$$V'(\bx'):=3\frac{(x'_1x'_2x'_3)^{1/3}}{x'_1 +x'_2 +x'_3}.$$
Consider a solution $\bx'(\cdot)$ of the replicator dynamics in $G'$
(with $\sum_{1 \leq i \leq 3}x'_i(0) >0$) and let
$v'(t)=V'(\bx'(t))$. On the face of $S_{N'}$ spanned by the
strategies of the original game:
$$\left\{\bx' \in S_{N'} : \sum_{1 \leq i \leq 4} x'_i=1\right\},$$
the replicator dynamics behaves just as in the base-game. Therefore,
(\ref{eq:mixed-4decrease}) and (\ref{eq:mixed-vdecrease}) imply
trivially that:
\begin{equation}
\label{eq:mixed'-4decrease} \left[\sum_{1\leq i\leq 4} x'_i = 1
\mbox{ and } V'(\bx') \leq \delta\right] \Rightarrow
\left[(\bU'\bx')_4 - \bx' \cdot \bU' \bx' \leq - \gamma\right]
\end{equation}
and
\begin{equation}
\label{eq:mixed'-vdecrease} \left[\sum_{1\leq i\leq 4} x'_i = 1
\mbox{ and } v'(t)=\delta \right] \Rightarrow \dot{v}'(t) \leq -
\gamma.
\end{equation}
Now define $\bar{\bx}' \in S_{N'}$ as the projection of $\bx'$ on the
face of $S_{N'}$ spanned by the strategies of the original game.
That is,
$$\bar{\bx}'_i=\frac{x'_i}{\sum_{1 \leq j \leq 4} x'_j} \mbox{ if } 1 \leq i
\leq 4 \mbox{, and }  \bar{\bx}'_i=0 \mbox{ otherwise.}$$
Note that $V'(\bx')=V(\bar{\bx}')$. Furthermore, a simple computation shows
that
$$\max_{1 \leq i \leq N'} |x'_i - \bar{x}'_i| \leq N' \max_{5
\leq k \leq N'} x'_k.$$
Therefore, since $(\bU'\bx')_4 - \bx' \cdot \bU' \bx'$ and the
vector field $\dot{\bx}'$ are Lipschitz in $\bx'$, it follows from
(\ref{eq:mixed'-4decrease}) and (\ref{eq:mixed'-vdecrease}) that
there exist positive constants $\eta$ and $\gamma'$ such that
\begin{equation}
\label{eq:mixed-4decrease'} \left[\max_{5 \leq k \leq N'} x'_k \leq
\eta \mbox{ and } V'(\bx') \leq \delta\right] \Rightarrow
(\bU'\bx')_4 - \bx' \cdot \bU' \bx' \leq - \gamma'
\end{equation}
and
\begin{equation}
\label{eq:mixed-vdecrease'} \left[\max_{5 \leq k \leq N'} x'_k \leq
\eta \mbox{ and } v'(t)= \delta\right] \Rightarrow \dot{v}'(t) \leq
- \gamma'.
\end{equation}

Fix $\by' \in S_{N'}$ such that
$$\sum_{1\leq i\leq 4} y'_i=1, V'(\by') < \delta \mbox{ and } C:=\min_{1 \leq i \leq 3} y'_i
>0.$$
There exists an open neighborhood $\Omega$ of $\by'$ in $S_{N'}$ such
that
$$\forall \bx' \in \Omega, \left[\, \min_{1 \leq i \leq 3} x'_i >C/2,
\hspace{0.1 cm} \max_{5 \leq k \leq N'} x'_k < C\eta/2, \, \mbox{
and } V'(\bx')<\delta \, \right].$$
Consider an interior solution $\bx'(\cdot)$ of the replicator
dynamics in $G'$ with initial condition in $\Omega$. Recall that
$\bp^k$ denotes the mixed strategy of $G$ associated with the pure
strategy $k$ of $G'$. To prove proposition
\ref{prop:mixed-as-new-pure} for the replicator dynamics, it
suffices to show that:
\begin{proposition}
For all $k$ in $\{4,...,N'\}$ such that $p^k_4>0$, $x'_k(t) \to_{t
\to + \infty} 0$.
\end{proposition}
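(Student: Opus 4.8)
The plan is to reduce everything to the decay of the share $x'_4$ of pure strategy $4$, and to exploit a conserved quantity of the replicator dynamics that ties the shares of the added strategies to those of the four original ones. First I would record the kinematics: writing $\bx=\sum_{k=1}^{N'}x'_k\bp^k$ for the induced strategy (\ref{eq:extgen-induce}), equations (\ref{eq:new-pure-are-old-mixed}) and (\ref{eq:oldstrat-available}) give $(\bU'\bx')_k=\bp^k\cdot\bU\bx$ and $\bx'\cdot\bU'\bx'=\bx\cdot\bU\bx$, so that under the replicator dynamics
$$\frac{d}{dt}\log x'_k=\bp^k\cdot\bU\bx-\bx\cdot\bU\bx=\sum_{i=1}^{4}p^k_i\big[(\bU\bx)_i-\bx\cdot\bU\bx\big]=\sum_{i=1}^4 p^k_i\,\frac{d}{dt}\log x'_i,$$
the last equality holding because each original pure strategy $i\le 4$ satisfies $\bp^i=\be_i$. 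Hence $R_k:=x'_k\big/\prod_{i=1}^{4}(x'_i)^{p^k_i}$ is constant along every interior trajectory, so $x'_k(t)=R_k\prod_{i=1}^4(x'_i(t))^{p^k_i}$; since each factor $(x'_i)^{p^k_i}\le 1$, this yields $x'_k(t)\le R_k\,(x'_4(t))^{p^k_4}$ whenever $p^k_4>0$. Thus once I show $x'_4(t)\to 0$, every such $x'_k(t)\to 0$ and the proposition follows.

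It remains to show $x'_4(t)\to 0$, and the heart of the argument is to prove that the trajectory never leaves the good region $\{\bx'\in S_{N'}:\ \max_{5\le k\le N'}x'_k\le\eta\text{ and }V'(\bx')\le\delta\}$, inside which (\ref{eq:mixed-4decrease'}) forces $\frac{d}{dt}\log x'_4=(\bU'\bx')_4-\bx'\cdot\bU'\bx'\le-\gamma'$. Let $T$ be the supremum of times $t$ such that both inequalities hold on $[0,t]$; by the choice of $\Omega$ they hold strictly at $t=0$, so $T>0$. On $[0,T]$, (\ref{eq:mixed-4decrease'}) makes $x'_4$ nonincreasing, hence $x'_4(t)\le x'_4(0)$; feeding this together with the bounds $x'_i(0)>C/2$ (for $i\le 3$) into the conserved quantity gives, for every $k\ge 5$,
$$x'_k(t)=x'_k(0)\prod_{i=1}^4\Big(\frac{x'_i(t)}{x'_i(0)}\Big)^{p^k_i}\le x'_k(0)\,(2/C)^{\,1-p^k_4}\le x'_k(0)\,\frac{2}{C}<\eta,$$
using $x'_k(0)<C\eta/2$. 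Simultaneously, (\ref{eq:mixed-vdecrease'}) prevents $V'$ from crossing the level $\delta$ from below while $\max_{k\ge5}x'_k\le\eta$: if $V'$ first reached $\delta$ at some $t_0\le T$, its derivative there would be $\ge 0$, contradicting $\dot v'(t_0)\le-\gamma'<0$. Hence at $t=T$ both inequalities still hold strictly, so by continuity they persist slightly beyond $T$; this contradicts the maximality of $T$ unless $T=+\infty$. With $T=+\infty$, one has $\frac{d}{dt}\log x'_4\le-\gamma'$ for all $t$, so $x'_4(t)\le x'_4(0)\,e^{-\gamma' t}\to 0$, and the reduction of the first paragraph concludes the proof.

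The main obstacle is the circularity between the two conditions: keeping the added strategies $k\ge 5$ below $\eta$ relies on $x'_4$ decreasing, while the decrease of $x'_4$ relies on being inside the good region, which in turn requires the added strategies to stay small. The conserved quantity $R_k$ is exactly what breaks this loop, since it bounds each $x'_k$ by a power of $x'_4$ times constants fixed by the initial condition in $\Omega$; a standard first-exit-time continuity argument then upgrades the instantaneous estimates (\ref{eq:mixed-4decrease'}) and (\ref{eq:mixed-vdecrease'}) into global forward invariance. A secondary point to handle with care is that, unlike the best-response case of Proposition \ref{prop:mixed-BR-induce}, the induced strategy $\bx$ need not itself solve the replicator dynamics in $G$, which is precisely why one must work with the invariant $R_k$ rather than reduce directly to the base game.
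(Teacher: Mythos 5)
Your proof is correct and follows essentially the same route as the paper: the conserved quantity $R_k$ is exactly the paper's integrated identity $x'_k(T)=x'_k(0)\prod_{1\leq i\leq 4}\bigl(x'_i(T)/x'_i(0)\bigr)^{p_i^k}$ (lemma \ref{lm:leq-eta}), and your first-exit-time invariance argument is the same contradiction as lemma \ref{lm:leq-eta-delta}, with the same conclusion $x'_4(t)\leq x'_4(0)e^{-\gamma' t}$ and $x'_k \leq \mathrm{const}\cdot (x'_4)^{p^k_4}$. Your closing remark about why one cannot reduce to the base game (unlike proposition \ref{prop:mixed-BR-induce} for the best-response dynamics) is also accurate.
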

\begin{proof}
We begin with two lemmas:
\begin{lemma}
\label{lm:leq-eta} Let $T>0$ and $k \in \{5,...,N'\}$. If $x'_4(T)
\leq x'_4(0)$ then $x'_k(T) < \eta$.
\end{lemma}
\begin{proof}
By construction of $G'$, strategy $k \in I'$ earns the same payoff
as the mixed strategy $\sum_{1 \leq i \leq 4} p^k_i \be_i'$:
$$(\bU'\bx')_k=\sum_{1 \leq i \leq 4} p^k_i (\bU'\bx')_i \hspace{1 cm} \forall \bx' \in S_{N'}.$$
Therefore, it follows from the definition of the replicator dynamics
that:
$$\frac{\dot{x}'_{k}}{x'_k}=\sum_{1 \leq i \leq 4} p_i^k \frac{\dot{x}'_i}{x'_i} \quad .$$
Integrating between $0$ and $T$ and taking the exponential of both
sides leads to:
\begin{equation}
\label{44B-eq:taking-exp} x'_k(T)=x'_k(0) \prod_{1 \leq i \leq 4}
\left(\frac{x'_i(T)}{x'_i(0)}\right)^{p_i^k}.
\end{equation}
Noting that for $1 \leq i \leq 3$, we have $x'_i(T) \leq 1$, $1/x'_i(0) \leq 2/C$ and $1 \leq 2/C$, we get:
\begin{equation}
\label{44B-eq:after-tak-exp} \prod_{1 \leq i \leq 3}
\left(\frac{x'_i(T)}{x'_i(0)}\right)^{p_i^k} \leq \prod_{1 \leq i
\leq 3}
\left(\frac{2}{C}\right)^{p_i^k}=\left(\frac{2}{C}\right)^{1-p_4^k}
\leq \frac{2}{C}.
\end{equation}
Since furthermore $x'_k(0) < C\eta/2$, we obtain from
(\ref{44B-eq:taking-exp}) and (\ref{44B-eq:after-tak-exp}):
\begin{equation}
\label{eq:leq-eta} x'_k(T) <\frac{C\eta}{2} \frac{2}{C}
\left(\frac{x_4'(T)}{x_4'(0)}\right)^{p_4^k} = \eta
\left(\frac{x_4'(T)}{x_4'(0)}\right)^{p_4^k}.
\end{equation}
The result follows.
\end{proof}
\begin{lemma}
\label{lm:leq-eta-delta} For all $t>0$, $\max_{k \in \{5,...,N'\}}
x'_k(t) < \eta$ and $v'(t)< \delta$.
\end{lemma}
\begin{proof}
Otherwise there is a first time $T>0$ such that $\max_{k \in
\{5,...,N'\}} x'_k(T) = \eta$ or $v'(T) = \delta$ (or both). It
follows from (\ref{eq:mixed-4decrease'}) and the definition of the
replicator dynamics that if $0 \leq t \leq T$ then
$\frac{\dot{x}'_4}{x'_4}(t) \leq - \gamma'<0$. Therefore $x'_4(T) \leq x'_4(0)$.
By lemma \ref{lm:leq-eta}, this implies that $\max_{k \in
\{5,...,N'\}} x'_k(T) < \eta$. Therefore, $v'(T) = \delta$. Due to
(\ref{eq:mixed-vdecrease'}), this implies that $\dot{v}'(T) <0$.
Therefore, there exists a time $T_1$ with $0 < T_1 < T$ such that
$v'(T_1) > \delta$, hence a time $T_2$ with $0 < T_2 < T_1 <T$ such
$v'(T_2)=\delta$, contradicting the minimality of $T$.
\end{proof}

We now conclude: it follows from lemma \ref{lm:leq-eta-delta},
equation (\ref{eq:mixed-4decrease'}) and the definition of the
replicator dynamics that for all $t\geq 0$, $x'_4(t) \leq
\exp(-\gamma't)x_4'(0)$. By (\ref{eq:leq-eta}) this implies that for
every $k$ in $\{5,...,N'\}$,
$$\forall t \geq 0, x'_k(t) < \eta \exp(-p^k_4 \gamma' t).$$
Therefore, if $p^k_4>0$ then $x'_k(t) \to 0$ as $t \to +\infty$.
\end{proof}
\end{appendix}

{\small 
}
\end{document}